\def\p{\partial}
\def\na{\nabla}
\def\x{{\mathbf x}}
\def\f{\frac}
\def\q{\quad}
\def\Om{\Omega}
\def\om{\omega}
\def\ep{\epsilon}
\def\mC{{\mathcal C}}
\def\mE{{\mathcal E}}
\def\Q{\mathbf Q}
\def\P{\mathbf P}
\def\a{\mathbf a}
\def\z{\mathbf z}
\newtheorem{theorem}{Theorem}[section]
\newtheorem{lemma}[theorem]{Lemma}
\newtheorem{remark}[theorem]{Remark}
\numberwithin{equation}{section}
\title{Electrical impedance spectroscopy-based nondestructive testing for imaging defects in concrete structures}
\author{Habib Ammari\footnotemark[2] \and  Jin Keun Seo\footnotemark[3] \and Tingting Zhang\footnotemark[3]  \and Liangdong Zhou\footnotemark[3]}
\begin{document}
\maketitle
\renewcommand{\thefootnote}{\fnsymbol{footnote}}
\footnotetext[2]{Department of Mathematics and Applications, Ecole
Normale Sup\'erieure, 45 Rue d'Ulm, 75005 Paris, France ({\tt
habib.ammari@ens.fr}).} \footnotetext[3]{Department of
Computational Science and Engineering, Yonsei University, Seoul 120-749
Korea, ({\tt seoj@yonsei.ac.kr, zttouc@hotmail.com,
zhould1990@hotmail.com}).}
\renewcommand{\thefootnote}{\arabic{footnote}}
\begin{abstract}
An electrical impedance spectroscopy-based nondestructive testing
(NDT) method is proposed to image both cracks and reinforcing bars
in concrete structures. The method utilizes the
frequency-dependent behavior of thin insulating cracks:
low-frequency electrical currents are blocked by insulating
cracks, whereas high-frequency currents can pass through the
conducting bars without being blocked by thin cracks. Rigorous
mathematical analysis relates the geometric structures of the
cracks and bars to the frequency-dependent Neumann-to-Dirichlet
data. Various numerical simulations support the feasibility of the
proposed method.
\end{abstract}
{\bf Key words:}
 Inverse problem, nondestructive testing, electrical impedance tomography, spectroscopic imaging, thin cracks, reinforcing bars, concrete structure

{\bf AMS subject classifications:} 35R30, 35B30
\thispagestyle{plain}
\markboth{H. Ammari, J. K. Seo, T. Zhang, and L. Zhou}{EIS-based NDT for imaging cracks}
\section{Introduction}
As a number of concrete structures currently in service reach the
end of their expected serviceable life, nondestructive testing
(NDT) methods to evaluate their durability, and thus to ensure
their structural integrity, have received gradually increasing
attention. Concrete often degrades by the corrosion of the
embedded reinforcing bars, which can lead to internal stress and
thus to structurally disruptive cracks \cite{Feldmann2008}.
Various NDT techniques are currently used to monitor the
reliability and condition of reinforced concrete structures
without causing damage. They include impact-echo, half-cell
potential, electrical resistivity testing, ground penetrating
radar, ultrasonic testing, infrared thermographic techniques, and
related tomographic imaging techniques
\cite{Colla1995,Davis1989,Guide,McCann2001,Sansalone1988,Sansalone1986,Yin2010}.
Each technique has its intrinsic limitations in terms of
reliability of defect detection, and the conventional techniques
often depend on the subjective judgment of the inspectors. The
limitations of existing methods have led to searches for more
advanced visual inspection methods to detect invisible flaws and
defects on the surface of concrete structures.

Electric methods such as electrical resistivity tomography (ERT)
and electrical capacitance tomography (ECT) have been used to
image cracks and steel reinforcing bars, which show clear
electrical contrast from the background concrete. These electric
methods can be used to complement acoustic methods by assessing
different characteristics. They operate at low cost over long time
periods. ERT and ECT employ multiple current sources to inject
currents, and boundary voltages are then measured using voltmeters
connected to multiple surface electrodes on the boundary of the
imaging subject. These methods use the relationship between the
applied current and the measured boundary voltage to invert the
image of cracks and reinforcing bars. The methods suffer from a
low defect location accuracy due to the ill-posedness of the
corresponding inverse problem. In fact, the boundary
current-voltage measurement alone may not be sufficient for robust
identification of defects. Most of research outcomes had
cooperated  with some form of a prior information to deal with
ill-posedness \cite{Ammari2009a, Ammari2009b, Ammari2011,
Ammari2008, Buettner1996, Beau2002,
Diamond2006,Hou2006,Kaipio2010a, Kaipio2009, Kaipio2010b}. They
suffer from low defect location accuracy owing to the ill-posed
nature of the corresponding inverse problem. The boundary
current-voltage measurement alone may not be sufficient for the
robust identification of defects. Most of the previous methods can
detect cracks and reinforcing bars when either only the crack or
the bar exists in the concrete samples. Numerous experiments show
that ERT or ECT applied with a single frequency struggle to
identify both cracks and reinforcing bars when electrical currents
are blocked by insulating cracks near the electrodes.

This paper focuses on electrical impedance spectroscopy-based NDT, viewing as an integrated ERT/ECT modality. It provides mathematical analysis to support a better method of visually inspecting defects in concrete structures.   A thorough understanding of the frequency-dependent effects of thin insulating cracks could be used to image cracks and reinforcing bars: low-frequency electrical currents are blocked by insulating cracks, whereas high-frequency currents can pass through the conducting bars without being blocked by thin cracks.

The mathematical analysis assumes that the background concrete is
roughly homogeneous. The effective admittivity of the concrete
could be regarded as roughly constant at coarse grid, despite it
comprising a complex mixture of several materials. The proposed
impedance-spectroscopy-based NDT method can provide visual
assessment of the condition of a concrete structure, instead of
coarse structure information, through tomographic images of the
effective admittivity of the heterogeneous concrete structure.

In this paper, the cracks are modeled as thin inhomogeneities in
the concrete, while the reinforcing bars are modeled as small
inhomogeneities \cite{Ammari2006, Ammari2004, Ammari2007,
Beretta2003, Beretta2001, Frieman1989}. Two operating frequency
regimes are considered: low and high. Based on  \cite{Zribi2011},
the corresponding asymptotic expansion of the boundary voltage is
established here at these two frequency regimes.

The main purpose is to show that multi-frequency impedance
measurements can be used to visualize different objects. For
simplicity, cracks are idealized as linear segments. The numerical
simulations use a conventional 16-channel EIT system, with the
electrical current applied between two adjacent electrodes at
different frequencies. The boundary voltage data are then measured
between two adjacent electrodes attached on the surface.
Frequency-difference EIT reconstruction allows the detection of
both the cracks and the reinforcing bars within the concrete
structures. A variety of numerical experiments is presented here
to illustrate the main findings.

\section{Mathematical model}

For  rigorous analysis, we use the simplified two-dimensional
model by considering axially symmetric cylindrical sections under
the assumption that the out-of-plane current density is negligible
in an imaging slice. We assume a two dimensional electrically
conducting domain $\Om$  with its connected $C^2$-boundary $\Om$.
We denote the conductivity distribution of the domain by $\sigma$
and the permittivity distribution by $\epsilon$. Inside $\Om$,
there exist thin cracks $\mC_k,~k=1,2,\cdots, N_C$ and reinforcing
bars $D_k,~ k=1,2,\cdots, N_D$ as shown in figure
\ref{Fig:Cmodel}.   Let $D=\cup_{k=1}^{N_D} D_k$ and
$\mC=\cup_{k=1}^{N_C} \mC_k$ denote the collections of the
reinforcing bars and  cracks, respectively.  Since the
conductivity $\sigma$ and permittivity $\ep$ change abruptly
across the reinforcing bars and cracks, we denote
\begin{equation}\label{Eq:Admt}
\sigma(x)=
\left\{\begin{array}{ll}
\sigma_c  &\q \mbox{for}~x\in  \mathcal{C},\\
 \sigma_d   &\q \mbox{for}~x\in D, \\
  \sigma_b  &\q\mbox{for}~x\in\Omega\backslash(D \cup \mathcal{C}), \\
\end{array}
\right. \mbox{ and } \epsilon(x)=
\left\{\begin{array}{ll}
 \epsilon_c  &\q \mbox{for}~x\in  \mathcal{C},\\
 \epsilon_d  &\q \mbox{for}~x\in D, \\
 \epsilon_b &\q\mbox{for}~x\in\Omega\backslash(D \cup \mathcal{C}). \\
\end{array}
\right.
\end{equation}
Because the cracks are highly insulating and the reinforcing bars are highly conducting, we consider the following two extreme contrast cases:
$$
\f{\sigma_c}{\sigma_b}\approx 0\q\mbox{and }\q \f{\sigma_d}{\sigma_b}\approx \infty.
$$

The inverse problem is to identify the cracks $\mC_k$ and
reinforcing bars $D_k$ from measured current-voltage data in
multi-frequency EIT system. In the frequency range below 1MHz
($\f{\om}{2\pi}\le 10^6$), we inject a sinusoidal current
$g(x)\sin (\om t)$ at $x\in\p\Om$ where $g$ is the magnitude of
the current density on $\p\Om$ and $g\in
H^{-1/2}_\diamond(\p\Om):=\{ \phi\in H^{-1/2}(\p\Om)~:~
\int_{\p\Om} \phi ds=0\}$. The injected current produces the
time-harmonic potential $u^\om$ in $\Om$ which is dictated by
\begin{equation}\label{Eq:uw}
\left\{
\begin{array}{ll}
 \nabla\cdot\left(\gamma^\omega(x)\nabla u^\omega(x)\right) =0 &\quad\mbox{in}~\Omega,\\
 \gamma^\omega \f{\partial u^ \omega}{\p \nu} = g &\quad\mbox{on}~\partial\Omega,
 \end{array}\right.
\end{equation}
where $ \gamma^{\om} = \sigma + i\omega \ep$, ${\nu}$ is the
outward unit normal vector on $\p\Om$, and $\f{\p}{\p\nu}$ is the
normal derivative. Setting $\int_{\p\Om} u^\om ds=0$, we can
obtain a unique solution $u^\om$ to (\ref{Eq:uw}) from the
Lax-Milgram theorem. Hence, we can define the Neumann to Dirichlet
map $\Lambda_\omega:H^{-1/2}_\diamond(\p\Omega) \to
H^{1/2}_\diamond(\p\Omega)$ by
$\Lambda_\omega(g)=u^\om|_{\p\Omega}$. Using $N_E-$channel
multi-frequency EIT system, we may inject $N_E$ number of linearly
independent currents at several angular frequencies
$\om_1,\cdots,\om_{N_\om}$ and measure the induced corresponding
boundary voltages. We collect these current-voltage data
$\{\Lambda_{\om_j}(g_k)~:~~k=1,\cdots, N_E,~j=1,\cdots,
N_\omega\}$ at various frequencies ranging from 10Hz to 1MHz which
will be used to detect cracks and reinforcing bars.

To carry out rigorous analysis, we will restrict our considerations to geometric structures of $\mC$ and $D$ as shown in Figure \ref{Fig:Cmodel}.  We assume that each crack $\mC_k$ has a uniform thickness of $\delta_k$ and is a neighborhood of a $C^2-$smooth open curve $\mathcal L_k$:
\begin{equation}\label{Eq:curve2}
  \mathcal{C}_k = \{x+h \nu_x~:~ x\in \mathcal L_k, ~-\delta_k<h<\delta_k  \},~~~(k=1,2,\cdots,N_C).
\end{equation}
The thickness to the crack length ratio is assumed to be very small, that is,  $\delta_k\approx 0$. We also assume that each reinforcing bar has the form
\begin{equation}\label{Dk}
D_k :=z_k + \delta_D B_k,~~~ (k=1,2,\cdots,N_D),
\end{equation}
where $B_k$ is a bounded smooth reference domain centered at $(0,0)$ and $\delta_D$ is related to the diameter of $D_k$.
\begin{figure}[ht!]
\begin{center}
\begin{tikzpicture}[scale=1.2]

   % domain
  \draw[draw=gray,fill=white] [line width=1pt](0,0) circle (2);
  \node at (1.4,1.6) {\footnotesize  $\partial\Omega$};
  \node at (1.7,0) {\footnotesize $\Omega$};

  % crack C_1
  \node at (-0.9,0.9){\footnotesize  $\mathcal{C}_1$};
  \draw[draw=black][line width=0.2pt](-1,0.8) to [out=-30, in=-160](0,0.8)  (0,0.8) to [out=20, in=150](0.5,0.8) (0.5,0.8) to [out=-30, in=-150](1,0.8);
  \draw[draw=black, draw=red][line width=0.1pt](-1,0.805) to [out=-30, in=-160](0,0.805) (0,0.805) to [out=20, in=150](0.5,0.805) (0.5,0.805) to [out=-30, in=-150](1,0.805);
   \draw[draw=black][line width=0.2pt](-1,0.81) to [out=-30, in=-160](0,0.81)  (0,0.81) to [out=20, in=150](0.5,0.81) (0.5,0.81) to [out=-30, in=-150](1,0.81);
  \draw[draw=black][line width=0.2pt](-1,0.8)--(-1,0.81) (1,0.8)--(1,0.81);

  %reinforcing bars Z_1, Z_2
  \draw[draw=white,fill=gray!50](-0.8,0.2)circle(0.2) (0.7,-0.3)circle(0.2);
  \node at(-0.8,0.2){$\cdot$};\node at  (-0.7, 0.2){\footnotesize $D_1$};
  \node at(0.7,-0.3){$\cdot$};\node at  (0.8, -0.3){\footnotesize $D_2$};

  %crack C_k
  \node at (-0.9,-0.8){\footnotesize $\mathcal{C}_k$};
   \draw[draw=black][line width=0.2pt](-1.2,-0.7) to [out=30, in=160](-0.7,-0.7)  (-0.7,-0.7) to [out=-20, in=200](0,-0.7) (0,-0.7) to [out=20, in=170](1.4,-0.7)  ;
  \draw[draw=black, draw=red][line width=0.1pt](-1.2,-0.71) to [out=30, in=160](-0.7,-0.71)  (-0.7,-0.71) to [out=-20, in=200](0,-0.71) (0,-0.71) to [out=20, in=170](1.4,-0.71)  ;
   \draw[draw=black][line width=0.2pt](-1.2,-0.72) to [out=30, in=160](-0.7,-0.72)  (-0.7,-0.72) to [out=-20, in=200](0,-0.72) (0,-0.72) to [out=20, in=170](1.4,-0.72)  ;
   \draw[draw=black][line width=0.2pt](-1.2,-0.7)--(-1.2,-0.72) (1.4,-0.7)--(1.4,-0.72);

   %crack left
   \node at (-1.5,0){\footnotesize  $\mathcal{C}_2$};
   \draw[draw=black][line width=0.2pt](-1.5,0.5)to[out=-100,in=70](-1.3,-0.4);
   \draw[draw=black, draw=red][line width=0.1pt](-1.505,0.5)to[out=-100,in=70](-1.305,-0.4);
   \draw[draw=black][line width=0.2pt](-1.51,0.5)to[out=-100,in=70](-1.31,-0.4);
   \draw[draw=black][line width=0.2pt](-1.5,0.5)--(-1.51,0.5) (-1.3,-0.4)--(-1.31,-0.4);
   %zoom view size
   \draw[densely dotted, draw=blue](-0.5,-0.9) rectangle(-0.2,-0.6);
   \draw[dashed,->] (-0.2,-0.9) -- (2.8, -1);
   \end{tikzpicture}
\begin{tikzpicture}[scale=6]
 \draw[densely dotted, draw=blue](-0.7,-1) rectangle(0,-0.3);
   \draw[draw=black][line width=0.2pt]  (-0.7,-0.5) to [out=-20, in=200](0,-0.5) ;
  \draw[draw=black, draw=red][line width=0.1pt]  (-0.7,-0.6) to [out=-20, in=200](0,-0.6) ;
   \draw[draw=black][line width=0.2pt] (-0.7,-0.7) to [out=-20, in=200](0,-0.7);

   %notations on pic
   \node at(-0.1,-0.62){\footnotesize $\mathcal{L}_k$};
   \draw[draw=blue,line width=0.3pt][->](-0.35,-0.67)--(-0.27,-0.67); \node at(-0.27,-0.7){\footnotesize ${ \tau}$};
   \draw[draw=blue!50!red,line width=0.3pt][->] (-0.35,-0.67)--(-0.35,-0.59);\node at(-0.30,-0.59){\footnotesize ${ \nu}$};
   \node at(-0.6,-0.735){$\cdot$};\node at(-0.55,-0.547){$\cdot$};\node at(-0.575,-0.64){$\cdot$};
   \node at(-0.575,-0.77){\footnotesize $x-\delta_k{ \nu}$};\node at(-0.56,-0.51){\footnotesize $x+\delta_k{ \nu}$};\node at(-0.585,-0.66){\footnotesize $x$};
   \draw[draw=black,line width=0.5pt, densely dotted][<->](-0.55,-0.547)--(-0.575,-0.64);
   \node at(-0.54,-0.6){\footnotesize $\delta_k$};
\end{tikzpicture}
\caption{ (left) Inside the domain $\Om$, there are highly insulating cracks $\mathcal C_k$ and highly conducting  $D_k$. (right) Crack $\mathcal C_k$ has uniform thickness of $\delta_k$. }\label{Fig:Cmodel}
\end{center}
\end{figure}
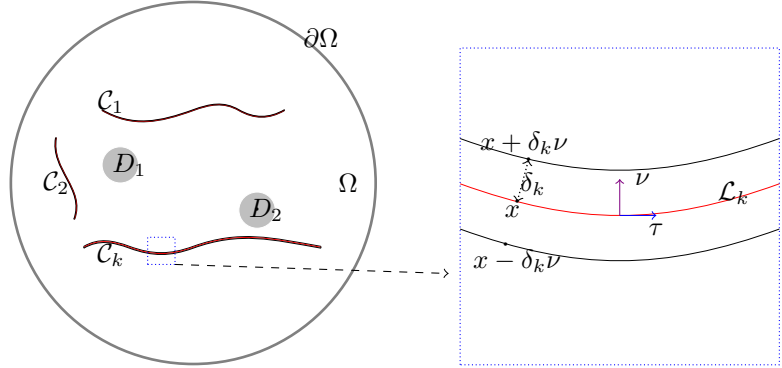

  We assume that $\mC_k$ and $D_k$ are well separated from each other as well as the boundary $\p\Omega$. To be precise, there exists a constant $d_0>0$ such that conditions:
\begin{equation}\label{separation}
\begin{array}{c}
  \inf_{k\neq k^\prime}\mbox{dist}(D_k,D_{k^\prime})\geq d_0, ~~\inf_{k\neq k^\prime}\mbox{dist}(\mathcal{C}_k,\mathcal{C}_{k^\prime})\geq d_0, \\
  \mbox{dist}(\mC,\p\Omega)\geq d_0, ~~\mbox{dist}(D,\p\Omega)\geq d_0,~~\mbox{dist}_{k,j}(\mC_k,D_j)\geq 2d_0.
\end{array}
\end{equation}
\section{Asymptotic expansions}
Since each crack $\mathcal C_k$ is highly insulating with very thin thickness, there is a noticeable potential jump along the crack \cite{Ammari2006,Kim2012} and the jump  changes with frequency. In this part we will focus on analyzing the frequency-dependent behaviors of the complex potential around the cracks. For a better understanding, we present the electrical current flux at different frequency ranges in Figure \ref{Fig:flux}. These figures clearly show how the electric current density changes with frequency in the presence of  both concrete cracks and reinforcing bars.
\subsection{Jump conditions}
To understand the phenomenon described in Figure \ref{Fig:flux}, we will start with investigating the jumps along sidewalls of concrete cracks by making use of Taylor expansion. By iterating the asymptotic formula for the crack, we can derive the leading-order term in the expansion of the boundary voltage when there are several well-separated cracks.
\begin{figure}[ht!]
  \begin{center}
    \begin{tikzpicture}
    \node at (-3,0) {\includegraphics[scale = 0.15]{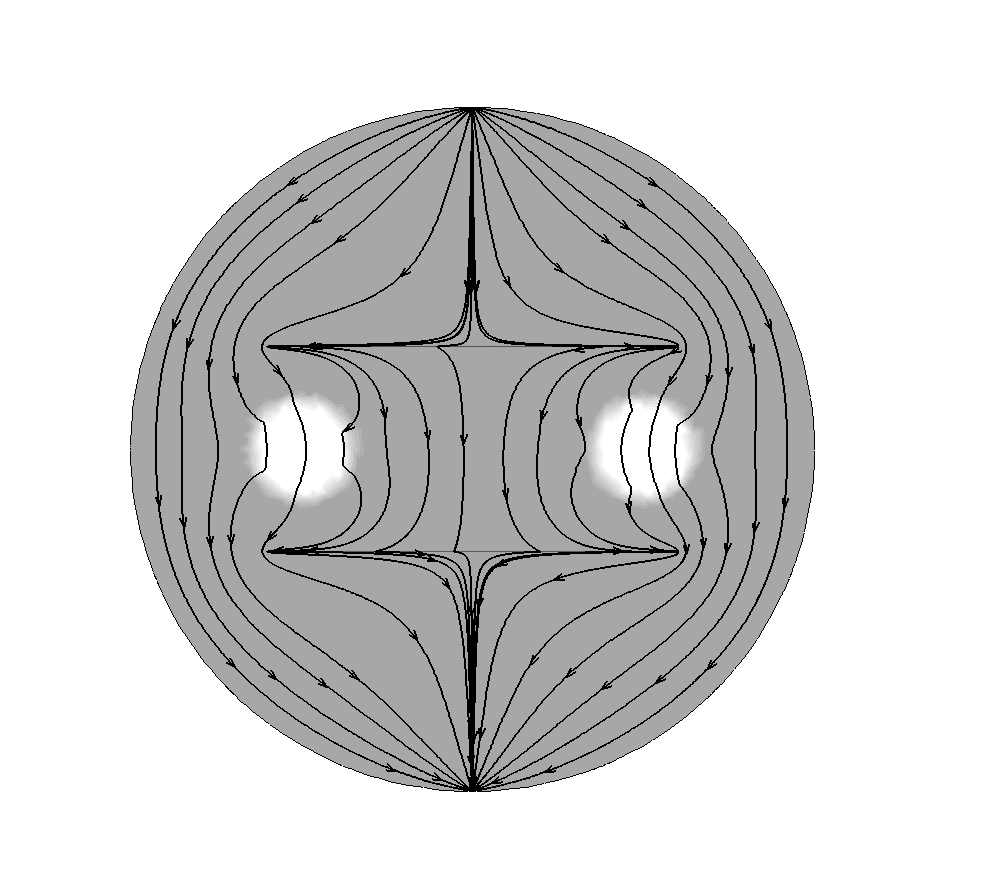}};
    \node at (3,0) {\includegraphics[scale = 0.15]{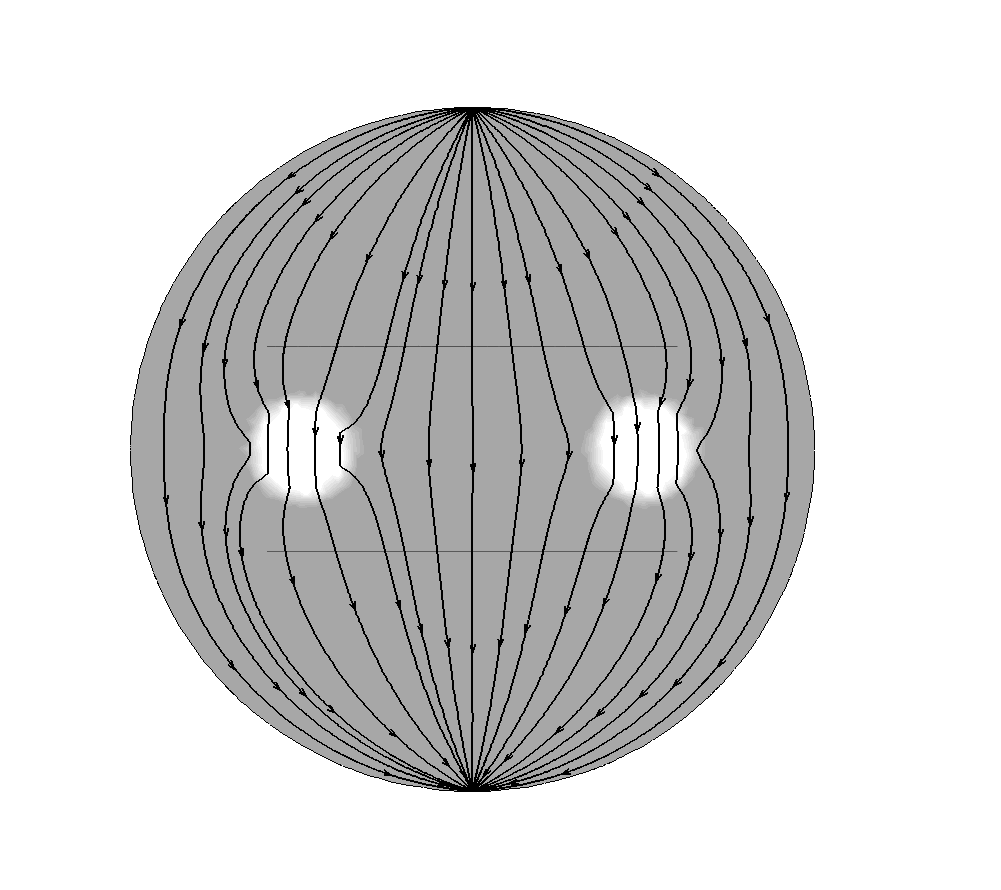}};
    \draw[thick] (-3.15,1.8)--(-3.15,2.3); \draw[thick] (-3.15,-2.3)--(-3.15,-1.8);
     \draw[thick] (2.85,1.8)--(2.85,2.3);  \draw[thick] (2.85,-2.3)--(2.85,-1.8);
   \draw [thick] (-3.15,-2.3) arc (-90:90:65pt);
   \draw[fill = gray!10] (-0.9,0) circle (10pt); \node at (-0.9,0)  {\textbf I};
     \draw [thick] (2.85,-2.3) arc (-90:90:65pt);
     \draw[fill = gray!10] (5.1,0) circle (10pt); \node at (5.1,0){\textbf I};
     \node at (-3.15, -3.0) {\textbf {(a)}};   \node at (2.85, -3.0) {\textbf {(b)}};
    \end{tikzpicture}
\caption{Electrical current flux for a concrete model with reinforcing bars (white) and cracks: (a) at low frequencies; (b) at  high frequencies.}\label{Fig:flux}
  \end{center}
  \end{figure}

For notational convenience, we define exterior($+$)/interior($-$)
normal derivative on the boundary of $\mathcal C_k$ as follows:
\[
\begin{array}{lll}
\f{\p u^\omega}{\p\nu}(x-\delta_k\nu_x )|_\pm &=& \lim_{s\rightarrow0^+}\f{\p u^\omega}{\p\nu}(x-\delta_k\nu_x \mp
s\nu_x ),\\
\f{\p u^\omega}{\p\nu}(x+\delta_k\nu_x )|_\pm&=
&\lim_{s\rightarrow0^+}\f{\p u^\omega}{\p\nu}(x+\delta_k\nu_x \pm s\nu_x )
\end{array} \quad~ \q(x\in \mathcal{L}_k),
\]
 where $\nu$ is the unit normal vector to the curve $\mathcal{L}_k$ as shown in Figure \ref{Fig:Cmodel}. Denote $[u^\omega]_k$ and $\left[\f{\p u^\omega}{\p\nu}\right]_k$ as jump of potential and jump of normal derivative across the boundary of crack $\mC_k$, respectively:
 \[
 \begin{array}{lll}
[u^\omega(x)]_k&:=&u^\omega(x+\delta_k\nu_x )-u^\omega(x-\delta_k\nu_x ) \\
 \left[ \f{\p u^\omega}{\p\nu}(x)\right]_k&:=& \f{\p u^\omega}{\p\nu}(x+\delta_k\nu_x )|_+-
 \f{\p u^\omega}{\p\nu}(x-\delta_k\nu_x )|_+
 \end{array}
 \quad~ \q(x\in \mathcal{L}_k).
 \]
 \begin{lemma}\label{Lemma:jump}
For $x\in \mathcal{L}_k$, the potential $u^\om$ and normal derivative $\f{\p u^\omega}{\p \nu}$ satisfy the following jump relations across the thin crack:
\begin{eqnarray}\label{Eq:VolJ}
[u^\omega(x)]_k = 2\delta_k\f{1}{\lambda_c(\omega)} \f{\p u^\omega}{\p\nu}(x-\delta_k\nu_x )|_+
+O\left((\delta_k)^2\right),
\end{eqnarray}
and
\begin{eqnarray}\label{Eq:NuJ}
 \left[ \f{\p u^\omega}{\p\nu}(x)\right]_k= - 2\delta_k\lambda_c(\omega) \f{\p^2 u^\omega}{\p\tau^2}(x-\delta_k\nu_x ) +  O \left((\delta_k)^2\right),
\end{eqnarray}
 where
\begin{equation}\label{Eq:lamC}
\lambda_c(\omega) =  \f{\sigma_c+i\omega\epsilon_c}{\sigma_b+i\omega\epsilon_b}.
\end{equation}
\end{lemma}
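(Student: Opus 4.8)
The plan is to derive both jump relations from the two transmission conditions that $u^\om$ satisfies across each sidewall of the crack, combined with a Taylor expansion of the interior trace in the normal variable. Since $\gamma^\om$ is piecewise constant, across either sidewall of $\mC_k$ the potential $u^\om$ is continuous and the normal flux $\gamma^\om \f{\p u^\om}{\p\nu}$ is continuous; writing these out at the inner wall $x-\delta_k\nu_x$ and the outer wall $x+\delta_k\nu_x$ gives
\[
\f{\p u^\om}{\p\nu}(x\pm\delta_k\nu_x)\big|_- = \f{1}{\lambda_c(\om)}\,\f{\p u^\om}{\p\nu}(x\pm\delta_k\nu_x)\big|_+,
\]
since $\lambda_c(\om)=\gamma_c^\om/\gamma_b^\om$ with $\gamma_c^\om=\sigma_c+i\om\ep_c$ and $\gamma_b^\om=\sigma_b+i\om\ep_b$. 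Interior elliptic regularity (the coefficients are constant inside $\mC_k$, so $u^\om$ is harmonic there, and $\mathcal L_k$ is $C^2$) guarantees that the trace $v(h):=u^\om(x+h\nu_x)$, for $-\delta_k\le h\le\delta_k$, is smooth enough to Taylor expand, with derivatives bounded uniformly in $x\in\mathcal L_k$.

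For the potential jump (\ref{Eq:VolJ}), I would expand $v$ about the inner wall $h=-\delta_k$,
\[
[u^\om(x)]_k = v(\delta_k)-v(-\delta_k) = 2\delta_k\,v'(-\delta_k) + O(\delta_k^2),
\]
using $v'(-\delta_k)=\f{\p u^\om}{\p\nu}(x-\delta_k\nu_x)|_-$. The transmission relation above converts the interior normal derivative into the exterior one, $v'(-\delta_k)=\f{1}{\lambda_c(\om)}\f{\p u^\om}{\p\nu}(x-\delta_k\nu_x)|_+$, which yields (\ref{Eq:VolJ}) directly; the $O(\delta_k^2)$ term collects $v''$ and is controlled by the uniform bound on second derivatives.

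For the normal-derivative jump (\ref{Eq:NuJ}), I would again use the flux condition on both walls to write
\[
\Big[\f{\p u^\om}{\p\nu}(x)\Big]_k = \lambda_c(\om)\big(v'(\delta_k)-v'(-\delta_k)\big) = \lambda_c(\om)\int_{-\delta_k}^{\delta_k} v''(h)\,dh,
\]
and then replace $v''(h)=\f{\p^2 u^\om}{\p\nu^2}$ using the equation. In the tubular coordinates $(\tau,h)$ the harmonicity $\na\cdot(\gamma_c^\om\na u^\om)=0$ reads $\f{\p^2 u^\om}{\p h^2}=-\f{\p^2 u^\om}{\p\tau^2}+O(h)$ to leading order, so $\int_{-\delta_k}^{\delta_k}v''\,dh = -2\delta_k\f{\p^2 u^\om}{\p\tau^2}(x)+O(\delta_k^2)$, giving (\ref{Eq:NuJ}) after multiplying by $\lambda_c(\om)$ (shifting the evaluation point from $x$ to $x-\delta_k\nu_x$ only changes the result at order $\delta_k^2$).

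The main obstacle is the bookkeeping of the curvature of $\mathcal L_k$ in the last step. The Laplacian in tubular coordinates is $\f{\p^2}{\p h^2}-\f{\kappa}{1-\kappa h}\f{\p}{\p h}+\f{1}{1-\kappa h}\f{\p}{\p\tau}\big(\f{1}{1-\kappa h}\f{\p}{\p\tau}\big)$, so in fact $v''=-\f{\p^2 u^\om}{\p\tau^2}+\kappa\,v'+O(h)$, and the extra piece $\lambda_c(\om)\kappa\int v'\,dh=\lambda_c(\om)\kappa\,[u^\om]_k$ is, by (\ref{Eq:VolJ}), itself of order $\delta_k$ rather than $\delta_k^2$. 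Thus the clean statement (\ref{Eq:NuJ}) is precisely the leading order for a straight crack ($\kappa\equiv0$), consistent with the idealization of cracks as linear segments adopted later; for genuinely curved $\mathcal L_k$ this curvature contribution must be carried along or absorbed into the leading term. The rest is routine: verifying that all the $O(\delta_k^2)$ remainders are uniform in $x$ and in the fixed frequency $\om$, which follows from the $C^2$ smoothness of $\mathcal L_k$ together with interior estimates for $u^\om$ away from $\p\Om$.
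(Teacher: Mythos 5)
Your proposal is correct and follows essentially the same route as the paper's own proof: a Taylor expansion in the normal variable across the crack combined with the two transmission conditions (continuity of $u^\om$ and of the flux $\gamma^\om\f{\p u^\om}{\p\nu}$, which converts interior to exterior normal derivatives via $\lambda_c(\om)$), with the interior equation used to trade $\f{\p^2 u^\om}{\p\nu^2}$ for $-\f{\p^2 u^\om}{\p\tau^2}$. Your curvature caveat is in fact a refinement the paper glosses over: its proof silently applies $\f{\p^2 u^\om}{\p\nu^2}\big|_- = -\f{\p^2 u^\om}{\p\tau^2}$, which is exact only when $\kappa\equiv 0$, consistent with the later idealization of cracks as line segments in Theorem \ref{Th:rebarC}.
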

\begin{proof} Using the transmission condition of the potential $u^\om$ along the boundary of $\mathcal C_k$, we have for $x\in\mathcal L_k$
\begin{eqnarray*}
% \nonumber to remove numbering (before each equation)
 \hspace{-0.5cm}u^\omega(x+\delta_k\nu_x )&=& u^\omega(x-\delta_k\nu_x )+2\delta_k\f{\p u^\omega}{\p\nu}(x-\delta_k\nu_x )|_- + O \left((\delta_k)^2\right),\\
 &=&u^\omega(x-\delta_k\nu_x )+2\delta_k\f{1}{\lambda_c(\omega)} \f{\p u^\omega}{\p\nu}(x-\delta_k\nu_x )|_+  + O \left((\delta_k)^2\right).
\end{eqnarray*}
Similarly, we use the transmission condition along the crack and Taylor expansion to get
\begin{eqnarray*}
\f{\p u^\omega}{\p\nu}(x+\delta_k\nu_x )|_+&=& \lambda_c(\omega)\f{\p u^\omega}{\p\nu}(x+\delta_k\nu_x )|_-\\
  &=&\lambda_c(\omega) \left(\f{\p u^\omega}{\p\nu}(x-\delta_k\nu_x )|_- +2\delta_k \f{\p^2 u^\omega}{\p\nu^2}(x-\delta_k\nu_x )|_- \right)+ O \left((\delta_k)^2\right),\\
  &=&\f{\p u^\omega}{\p\nu}(x-\delta_k\nu_x )|_+ - 2\lambda_c(\omega)\delta_k \f{\p^2 u^\omega}{\p\tau^2}(x-\delta_k\nu_x ) + O \left((\delta_k)^2\right),
\end{eqnarray*} with $\tau$ the unit tangent vector with respect to $x\in\mathcal{L}_k$.
\end{proof}

From the above asymptotic formulas (\ref{Eq:VolJ}) and (\ref{Eq:NuJ}), the jump of the potential and its normal derivative across the cracks depend on angular frequency $\omega$ as well as the thickness $\delta_k$. Therefore, the multi-frequency Cauchy data (multi-frequency current-voltage data $\{\Lambda_{\om_j}(g_k)~:~~k=1,\cdots, N_E,~j=1,\cdots, N_\omega\}$) reflects not only the geometry of cracks $\mathcal L_k$ but also its thickness $\delta_k$.  This is the major advantage of multi-frequency EIT system over the other existing non-destructive testing system.

\subsection{Effective zero-thickness crack model}
Based on Lemma \ref{Lemma:jump}, we can describe an effective zero-thickness crack model by imposing the jump conditions of $[u^\omega]_k$ and $\left[ \f{\p u^\omega}{\p\nu}\right]_k$ on the curves $\mathcal L_k$.  This means that  the potential $u^\om$ can be approximated by the corresponding potential $\widetilde u^{\om}$ satisfying the effective zero-thickness crack model \cite{Zribi2011,Poignard2007,Poignard2009}:
\begin{equation}\label{modelcell}
\begin{cases}
 &\nabla \cdot ((\gamma^{\omega}_b+ (\gamma^{\omega}_d-\gamma^{\omega}_b)\chi_D)\nabla \widetilde u^\om)= 0 ~~\q \textrm{in}\q \Om\setminus \cup_{k=1}^{N_C}\mathcal L_k,\\
&\left[\f{\p}{\p\nu}  \widetilde u^{\om}\right]_{\mathcal L_k} =0,~~~k=1,2,\ldots, N_C,\\
&\left[\widetilde u^\om\right]_{\mathcal L_k}= 2\delta_k\f{1}{\lambda_c(\omega)} \f{\p \widetilde u^\omega}{\p\nu}|_+,~~~k=1,2,\ldots, N_C,\\
& \gamma^\omega_b \f{\p \widetilde u^\omega}{\p \nu} = g \quad\mbox{on}~\p \Om,\\
\end{cases}\end{equation}
where $\chi_D$ is the characteristic function of $D$ and
 \begin{equation}\label{crack-model-jump}
  \begin{array}{lll}
[\widetilde u^\omega(x)]_{\mathcal L_k}&:=&\lim_{s\rightarrow0^+} \left(\widetilde u^\omega(x+s\nu_x )-\widetilde u^\omega(x-s\nu_x )\right) \\
 \left[ \f{\p \widetilde u^\omega}{\p\nu}(x)\right]_{\mathcal L_k}&:=& \lim_{s\rightarrow 0^+} \left(\f{\p \widetilde u^\omega}{\p\nu}(x+s\nu_x )- \f{\p \widetilde u^\omega}{\p\nu}(x-s\nu_x )\right)
 \end{array}
~\q(x\in \mathcal{L}_k).
\end{equation}

Since $u^\om \approx \tilde u^\omega$ in $\{ x\in \Om~: \mbox{dist}(x,\p\Om) < \f{d_0}{2}\}$, the forward model (\ref{Eq:uw}) and the effective zero-thickness crack model (\ref{modelcell}) have basically the same Neumann-to-Dirichlet data in terms of the inverse problem.
From now on, let $u^\om$ denote a solution of (\ref{modelcell}) for notational simplicity. From the above zero-thickness crack model, the boundary condition along curve $\mathcal{L}_k$ depends on thickness $\delta_k$ of concrete crack as well as the value of $\lambda_c(\omega)$ which is related with injected current frequency $\omega$. The aim of the following few sections is to derive an explicit formula for detecting positions of reinforcing bars and cracks by using asymptotic expansions of $u^\omega$. The explicit formula depends on injected frequency $\omega$ and crack thickness $\delta_k$. We consider separately the following two cases\cite{Zribi2011}:
\begin{itemize}
 \item High-frequency case: $\delta_k\approx 0$ and $0<c_0\leq|\lambda_c(\omega)|$.
 \item Low-frequency case: $|\lambda_c(\omega)|\approx 0$ and $\delta_k\approx 0$ with $|\lambda_c(\omega)|^{-1}\delta_k \approx \beta$ and $0<\beta<\infty$.
\end{itemize}
\subsection{High-frequency case: \boldmath{$\delta_k\approx 0$ and $0<c_0\leq|\lambda_c(\omega)|$ }} Let $u_0$ be the solution of equation (\ref{Eq:uw}) with $g= a\cdot\nu$ on $\p\Omega$ and $\gamma^\omega = 1$, where $a$ is a unit vector in $\mathbb{R}^2$.

Denote the fundamental solution of Laplace equation in two dimension as $\Gamma(x,x')$:
$$\Gamma(x,x'):= -\f{1}{2\pi} \ln|x-x'|,$$
and define the trace operator $\mathcal K_\Omega[\phi]$  for $\phi \in L^2(\p\Omega)$ by
\begin{equation}\label{Eq:komega}
  \mathcal{K}_\Omega [\phi](x) := \f{1}{2\pi}\int_{\p\Omega} \f{(x'-x)\cdot\nu(x')}{|x-x'|^2}\phi(x')ds_{x'},
  \quad x\in \p\Omega.
\end{equation}

In  high-frequency case, we suppose that the injected current frequency $\omega$ is not that low, so that $|\lambda_c(\omega)|$ is away from zero. When the thickness $\delta_k$ goes to zero, the potential jump along each crack $\mathcal{C}_k$ also goes to zero according to lemma (\ref{Lemma:jump}). Therefore the proposed problem can be regarded as traditional impedance boundary value problem and the influence of concrete crack on the  high-frequency current-voltage data is very weak as shown in Figure \ref{Fig:flux}(b).

In this case, the following boundary voltage asymptotic expansion  holds at high-frequencies. For detailed analysis and similar proof, one may refer to \cite{Ammari2006,Beretta2003,Beretta2001,Frieman1989}.
\begin{theorem}\label{Th:highasy}[Asymptotic expansion at high-frequencies]
For $x\in\p\Omega$, when the injection current frequency is high, the perturbations of voltage potential $u^\omega$ due to small inclusions $D_k$ and thin inclusions $\mC_k$ can be expressed as
\begin{eqnarray}\label{Eq:SepU}
&& \left(-\f{1}{2} {I}+\mathcal{K}_\Omega\right)[u^\omega-u_0](x)\nonumber \\
&=& - \sum_{k=1}^{N_C}\int_{\mathcal{L}_k} \delta_kA_k(x',\lambda_c(\omega))\nabla u_0(x')\cdot\nabla \Gamma(x,x')ds_{x'}\nonumber \\
&&-\delta_D^2 \sum_{k=1}^{N_D}\nabla \Gamma(x,z_k)\cdot
M(\lambda_d(\omega),B_k)\nabla u_0(z_k) + O (\delta_k^2)+ O
(\delta_D^3),
\end{eqnarray}
where $A_k(x,\lambda_c(\omega))$ is a $2\times 2$ symmetric matrix whose eigenvectors are
 $\nu_k(x)$ and $\tau_k(x)$ and the corresponding eigenvalues are $2(1-\f{1}{\lambda_c(\omega)})$ and $2\left(\lambda_c(\omega)-1\right)$, respectively. And $M(\lambda_d(\omega),B_k)$ is polarization tensor given by
\begin{equation}\label{Eq:GPT}
M_{ij}:=\int_{\p B_k}y^j(\lambda_d(\omega)I-\mathcal{K}^*_{B_k})^{-1}(\nu_x\cdot\na x^i)(y)ds_y,\quad i,j=1,2,
\end{equation}
with \begin{equation}\label{Eq:lamD}
  \lambda_d(\omega) = \f{(\sigma_d+\sigma_b)+i\omega(\epsilon_d+\epsilon_b)}{2((\sigma_d-\sigma_b)
  -i\omega(\epsilon_d-\epsilon_b))}. \end{equation}
\end{theorem}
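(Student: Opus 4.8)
The plan is to reduce the identity \eqref{Eq:SepU} to a boundary integral representation of $w:=u^\om-u_0$ and then to evaluate two interior source terms—one per family of defects—by known polarization‑tensor asymptotics, the two contributions decoupling to leading order thanks to the separation hypothesis \eqref{separation}. First I would write Green's representation formula for $w$ in $\Om$ against $\Gamma(x,\cdot)$. Because $u^\om$ and $u_0$ carry the same Neumann data on $\p\Om$ (after normalizing by $\gamma^\om_b$), the single‑layer contribution on $\p\Om$ drops out, and taking the interior trace of the remaining double‑layer term is what produces the boundary operator $-\f12 I+\mathcal{K}_\Omega$ on the left of \eqref{Eq:SepU} (with the paper's sign convention for $\mathcal{K}_\Omega$). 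What is left on the right are the interior sources generated along each crack $\mathcal C_k$ and across each bar boundary $\p D_k$.

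Next I would invoke \eqref{separation} to decouple these sources: the interaction of one defect with the field scattered by another is of higher order, so the leading term is the superposition of the individual contributions. For the bars, each $D_k=z_k+\delta_D B_k$ is a diametrically small inclusion whose admittivity contrast is encoded by $\lambda_d(\om)$, and I would apply the classical small‑inclusion expansion of \cite{Ammari2006,Beretta2003,Beretta2001,Frieman1989}: its leading effect on $w$ is the dipole field $-\delta_D^2\,\na\Gamma(x,z_k)\cdot M(\lambda_d(\om),B_k)\na u_0(z_k)$, with the first‑order polarization tensor $M$ of \eqref{Eq:GPT} and remainder $O(\delta_D^3)$. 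This expansion already legitimizes evaluating the background field at $z_k$ as $\na u_0(z_k)$.

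The heart of the argument is the crack term, i.e.\ identifying the symmetric tensor $A_k$. Rather than the zero‑thickness surrogate \eqref{modelcell}, I would keep the thin strip $\mathcal C_k$ of half‑thickness $\delta_k$ and admittivity contrast $\lambda_c(\om)$ of \eqref{Eq:lamC} (their Neumann‑to‑Dirichlet data agree to the relevant order), and read off the field inside the strip from the transmission conditions underlying Lemma~\ref{Lemma:jump}: continuity of the potential forces the interior tangential field to agree with $\p_\tau u_0$, while continuity of the normal flux $\gamma^\om\p_\nu u$ rescales the interior normal field by $1/\lambda_c(\om)$. The induced polarization dipole equals the length element times $2\delta_k(\lambda_c(\om)-1)$ times the interior field, so its density has tangential component $2\delta_k(\lambda_c(\om)-1)\,\p_\tau u_0$ and normal component $2\delta_k\left(1-\f{1}{\lambda_c(\om)}\right)\p_\nu u_0$—exactly the matrix $A_k$ with eigenvectors $\tau_k,\nu_k$ and eigenvalues $2(\lambda_c(\om)-1)$ and $2\left(1-\f{1}{\lambda_c(\om)}\right)$. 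Integrating this dipole density against $\na\Gamma(x,\cdot)$ over $\mathcal L_k$, and replacing $\na u^\om$ by $\na u_0$ to leading order, yields the crack integral in \eqref{Eq:SepU} with remainder $O(\delta_k^2)$.

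The main obstacle I anticipate is precisely getting $A_k$ right: a naive substitution of a single jump relation from Lemma~\ref{Lemma:jump} into the representation formula reproduces only the normal eigenvalue and misses the tangential one, so one must treat the normal and tangential field components inside the strip on an equal footing (equivalently, use the potential‑jump and the flux‑jump relations consistently). A secondary difficulty is the crack endpoints, where the thin‑inclusion scaling degenerates and the curvature of $\mathcal L_k$ enters; I would argue, as in the cited references, that these localized contributions are $O(\delta_k^2)$ and hence absorbed into the stated remainder, and that \eqref{separation} keeps all cross‑interaction and near‑boundary corrections within $O(\delta_k^2)+O(\delta_D^3)$.
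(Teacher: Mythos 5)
Your proposal is correct and is essentially the proof the paper intends: the paper states Theorem \ref{Th:highasy} without proof, deferring explicitly to \cite{Ammari2006,Beretta2003,Beretta2001,Frieman1989}, and your argument---Green's representation for $u^\omega-u_0$ with the double-layer trace jump producing $\left(-\f{1}{2}I+\mathcal{K}_\Omega\right)$, decoupling of the well-separated defects via (\ref{separation}), the classical dipole/polarization-tensor expansion for the small disks with $O(\delta_D^3)$ remainder, and the thin-strip transmission analysis yielding $A_k$ with tangential eigenvalue $2(\lambda_c(\omega)-1)$ and normal eigenvalue $2\left(1-\f{1}{\lambda_c(\omega)}\right)$---is precisely the standard derivation in those cited references. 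You also correctly flag the one delicate point (getting both eigenvalues of $A_k$, not just the normal one) and appropriately delegate the rigorous $O(\delta_k^2)$ endpoint/error control to the same literature the paper itself invokes.
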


Theorem \ref{Th:highasy} has  obvious meaning that the measured
boundary data is influenced by cracks and reinforcing bars since
the first term on right-side of formula (\ref{Eq:SepU}) only
related with cracks while the second term only related with
reinforcing bars. Depending on the magnitude of $\om, \delta_k$
and $\delta_D$, the dominative term on right-side of formula
(\ref{Eq:SepU}) may be alternative. To see the effect of $\om,
\delta_k$ and $\delta_D$ on the measured boundary data more
clearly, we need further analysis on the expansion formula in
Theorem \ref{Th:highasy}.

\begin{theorem}[Identification of cracks and bars]\label{Th:rebarC}
Let $\lambda_c(\omega)$ and $\delta_k$ satisfy the conditions stated in high-frequency case. Assume that all the cracks are line segments and all the bars are disks. Let $Q_k$ and $P_k$ denote the endpoints of the segment $\mathcal{L}_k$ and let $z_k$ denote the center of $D_k$.
Then $(-\f{1}{2}I+\mathcal{K}_\Omega)[u^\omega-u_0]$ on the boundary $\p\Om$ can be expressed as
\begin{align}\label{Eq:holo}
\Re\left\{ (-\f{1}{2}I+\mathcal{K}_\Omega)[u^\omega-u_0]({\bf x})\right\} = \Re\{G^\Re(\x)\}
+O (\delta_k^2)+ O (\delta_D^3),\\
\Im\{(-\f{1}{2} {I}+\mathcal{K}_\Omega)[u^\omega-u_0]({\bf
x})\}=\Re\left\{G^\Im(x)\right\} +O (\delta_k^2)+ O (\delta_D^3),
\end{align} where $G^\Re$ and $G^\Im$ are meromorphic  functions:
\begin{align}
\f{dG^\Re(\x)}{d\x}=\sum_{k=1}^{N_C}
{\mathfrak{C}_k^{\Re}(\om,\delta_k)}\left(\f{1}{\x-\Q_k}-\f{1}{\x-\P_k}\right)-
\sum_{k=1}^{N_D}{\mathfrak{D}_k^{\Re}(\om,\delta_D)}\f{1}{(\x-\z_k)^2}\label{Eq:holo2-1}\\
\f{d G^\Im(\x)}{d \x} = \sum_{k=1}^{N_C}{\mathfrak{C}_k^{\Im}(\om,\delta_k)}\left(\f{1}{\x-\Q_k}-\f{1}{\x-\P_k}\right)
-\sum_{k=1}^{N_D}{\mathfrak{D}_k^{\Im}(\om,\delta_D)}\f{1}{(\x-\z_k)^2}
 \label{Eq:holo2-2}\end{align}
and
 \begin{align}
\mathfrak{C}_k^{\Re}(\omega,\delta_k)&=\f{\delta_k}{\pi}\left(\Re\{(\lambda_c(\omega)-1)\}a_{\tau_k}+i\Re\{(1-\f{1}{\lambda_c(\omega)}) \} a_{\nu_k}\right)\label{Def:holo3-1}\\
\mathfrak{C}_k^{\Im}(\omega,\delta_k)&=\f{\delta_k}{\pi}\left(\Im\{\lambda_c(\omega)-1\}a_{\tau_k}+i\Im\{1-\f{1}{\lambda_c(\omega)}\} a_{\nu_k}\right)\label{Def:holo3-2}\\
{\mathfrak{D}}_k^{\Re}(\omega,\delta_D)&=-\Re\left\{\f{|B_k|\delta_D^2}{2\pi\lambda_d(\omega)}\right\}\a,
\quad \mathfrak{D}_k^{\Im}(\omega,\delta_D)
=-\Im\left\{\f{|B_k|\delta_D^2}{2\pi\lambda_d(\omega)}\right\}\a
.\label{Def:holo3-3}
 \end{align}
Here, $a_{\nu_k}=a\cdot\nu_k, a_{\tau_k}=a\cdot\tau_k,~
 {\x} =  x\cdot(1,i),~ {\bf a}=a\cdot(1,i), ~{\z_k} = z_k\cdot(1,i),~
  {\P_k}=P_k\cdot(1,i), ~\hbox{and} ~~{\Q_k}=Q_k\cdot(1,i).
$
\end{theorem}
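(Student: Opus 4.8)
The plan is to substitute the geometric hypotheses into the expansion of Theorem~\ref{Th:highasy} and then expose the holomorphic structure through the complex variable $\x=x\cdot(1,i)$. The first and most useful simplification comes from the special background field: since $\gamma^\om\equiv1$ and $g=a\cdot\nu$, the function $u_0(x)=a\cdot x$ solves (\ref{Eq:uw}), so $\na u_0\equiv a$ is \emph{constant}. This eliminates all $x'$-dependence from the factors $\na u_0(x')$ and $\na u_0(z_k)$ in (\ref{Eq:SepU}), leaving only the kernels $\na_x\Gamma(x,x')$ to be integrated. I then use the two shape assumptions: because each $\mathcal L_k$ is a line segment, the frame $(\nu_k,\tau_k)$ and hence the matrix $A_k(x',\lambda_c(\om))$ is constant along $\mathcal L_k$ and can be pulled out of the integral; and because each $B_k$ is a disk, its rotational symmetry forces the polarization tensor (\ref{Eq:GPT}) to be isotropic, $M(\lambda_d(\om),B_k)=c_k I$ with $c_k$ a scalar proportional to $|B_k|/\lambda_d(\om)$. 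Thus both $A_k a$ and $Ma=c_k a$ are constant vectors.

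The core step is to rewrite $b\cdot\na_x\Gamma$ in complex form. Setting $\x=x\cdot(1,i)$, $\x'=x'\cdot(1,i)$, a direct computation yields the Wirtinger identity
\begin{equation*}
(\p_{x_1}-i\p_{x_2})\Gamma(x,x')=-\f{1}{2\pi}\,\f{1}{\x-\x'},
\end{equation*}
so that for any constant (possibly complex) vector $b$ one has $b\cdot\na_x\Gamma=\f12(\mathbf b\,h+\mathbf b'\,\overline h)$, where $h=-\f{1}{2\pi}(\x-\x')^{-1}$ is holomorphic in $\x$, $\overline h$ is its conjugate, $\mathbf b=b\cdot(1,i)$, and $\mathbf b'=b\cdot(1,-i)$. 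For the crack terms I evaluate the line integral by parametrizing $\mathcal L_k$ from $P_k$ to $Q_k$ along $\tau_k$:
\begin{equation*}
\int_{\mathcal L_k}\f{ds_{x'}}{\x-\x'}=-\f{1}{\tau_k\cdot(1,i)}\,\ln\f{\x-\Q_k}{\x-\P_k},
\end{equation*}
which is where the endpoints and the simple-pole structure $\f{1}{\x-\Q_k}-\f{1}{\x-\P_k}$ (after differentiation) come from; the bar terms are already point evaluations, producing $\na_x\Gamma(x,z_k)$, hence $(\x-\z_k)^{-1}$ and, after differentiation, $(\x-\z_k)^{-2}$.

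It then remains to separate real and imaginary parts. Every contribution now has the form $\mathbf b\,L+\mathbf b'\,\overline L$ with $L$ holomorphic, and using $\Re w=\Re\overline w$ I fold the anti-holomorphic piece onto the holomorphic one: $\Re\{\mathbf bL+\mathbf b'\overline L\}=\Re\{(\mathbf b+\overline{\mathbf b'})L\}$ and $\Im\{\mathbf bL+\mathbf b'\overline L\}=\Re\{-i(\mathbf b-\overline{\mathbf b'})L\}$. Since $\mathbf b$ is built from the eigenvalues $2(\lambda_c(\om)-1)$, $2(1-\lambda_c(\om)^{-1})$ (respectively from $c_k$), the combinations $\mathbf b+\overline{\mathbf b'}$ and $\mathbf b-\overline{\mathbf b'}$ extract exactly $\Re$ and $\Im$ of those eigenvalues, and together with $\nu_k\cdot(1,i)=i\,\tau_k\cdot(1,i)$ this reproduces the coefficients $\mathfrak C_k^\Re,\mathfrak C_k^\Im$ of (\ref{Def:holo3-1})--(\ref{Def:holo3-2}) and $\mathfrak D_k^\Re,\mathfrak D_k^\Im$ of (\ref{Def:holo3-3}). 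In this way $\Re\{(-\f12 I+\mK_\Om)[u^\om-u_0]\}$ and $\Im\{(-\f12 I+\mK_\Om)[u^\om-u_0]\}$ are realized as real parts of explicit holomorphic functions $G^\Re,G^\Im$; differentiating in $\x$ removes the multivaluedness of the logarithm and gives the meromorphic derivatives (\ref{Eq:holo2-1})--(\ref{Eq:holo2-2}).

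I expect the main obstacle to be the bookkeeping between the two different complex structures in play: the \emph{physical} complexification from the admittivity $\gamma^\om=\sigma+i\om\ep$ (which makes $\lambda_c,\lambda_d$, the eigenvalues of $A_k$, and $c_k$ complex) and the \emph{geometric} complexification $x\mapsto\x=x\cdot(1,i)$ of the plane (which splits $\na\Gamma$ into holomorphic and anti-holomorphic parts). Keeping these apart while applying $\Re w=\Re\overline w$ to recombine terms, and fixing the orientation conventions (the roles of $\nu_k$ and $\tau_k$, and the direction $P_k\to Q_k$) so that the signs in $\mathfrak C_k^{\Re},\mathfrak C_k^{\Im}$ and $\mathfrak D_k^{\Re},\mathfrak D_k^{\Im}$ come out correctly, is the delicate part; the remaining manipulations are routine.
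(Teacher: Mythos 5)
Your proposal is correct and follows essentially the same route as the paper's proof: specialize Theorem \ref{Th:highasy} with $u_0=a\cdot x$ (so $\nabla u_0\equiv a$), use the isotropy $M(\lambda_d(\omega),B_k)=\frac{|B_k|}{\lambda_d(\omega)}I$ for disks and the constancy of $(\nu_k,\tau_k)$ on segments, evaluate $\int_{\mathcal{L}_k}\frac{ds_{x'}}{\x-\x'}$ explicitly to produce $\ln\frac{\x-\Q_k}{\x-\P_k}$, realize the real and imaginary parts of the boundary functional as $\Re$ of holomorphic functions, and differentiate to obtain the meromorphic derivatives; your Wirtinger-style folding $\Re\{\mathbf{b}L+\mathbf{b}'\overline{L}\}=\Re\{(\mathbf{b}+\overline{\mathbf{b}'})L\}$ is just a repackaging of the paper's step of taking $\Re$ (resp.\ $\Im$) of the complex eigenvalue coefficients first --- legitimate because the kernel $\frac{x-x'}{|x-x'|^2}$ is real --- and then re-complexifying via $\frac{\xi\cdot(x-x')}{|x-x'|^2}=\Re\bigl\{\frac{\mbox{\boldmath $\xi$}}{\x-\x'}\bigr\}$. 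The one convention to settle in the bookkeeping you yourself flag: in the paper's proof the $\nabla\Gamma(x,x')$ appearing in (\ref{Eq:SepU}) is the gradient in the \emph{second} variable (kernel $+\frac{1}{2\pi}\frac{x-x'}{|x-x'|^2}$, as the passage from (\ref{Phi0}) to (\ref{Phi2}) shows), so your choice of $\nabla_x\Gamma$ flips the overall sign of both the crack and bar sums and must be reversed to land on (\ref{Def:holo3-1})--(\ref{Def:holo3-3}) with the stated signs.
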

%%%%%----------------------------------------------------------------------------------------%%%%%
\begin{proof}
Since $B_k$ is a disk, the formula (\ref{Eq:GPT}) gives
$M(\lambda_d(\omega),B_k)=\f{|B_k|}{\lambda_d(\omega)}I$.  Hence,
the formula (\ref{Eq:SepU}) in Theorem \ref{Th:highasy} can be
expressed as
\begin{equation}\label{Phi0}
 \left(-\f{1}{2} {I}+\mathcal{K}_\Omega\right)[u^\omega-u_0](x)=\Phi(x) + O (\delta_k^2)+
 O (\delta_D^3)\q \q (x\in\p\Om),
\end{equation}
where $\Phi$ is
\begin{equation}\label{Phi1}
\Phi(x)= -
\sum_{k=1}^{N_C}\delta_k\int_{\mathcal{L}_k}(A_k~a)\cdot\nabla
\Gamma(x,x')ds_{x'}-\f{\delta_D^2}{2\pi}
\sum_{k=1}^{N_D}\f{|B_k|}{\lambda_d(\omega)}\f{x-z_k}{|x-z_k|^2}\cdot
a .
\end{equation}
We use $a=a_{\nu_k}\nu_k+ a_{\tau_k}\tau_k$ to get
\begin{eqnarray}
&&\Phi(x)=-\f{1}{2\pi}\sum_{k=1}^{N_C}\delta_k\int_{\mathcal{L}_k}\left(2(1-\f{1}{\lambda_c(\omega)})
a_{\nu_k}\nu_k+ 2(\lambda_c(\omega)-1)a_{\tau_k}\tau_k\right)\cdot\f{x-x'}{|x-x'|^2}ds_{x'}\nonumber\\
&&\,\q
\q\q\q-\f{\delta_D^2}{2\pi\lambda_d(\omega)}\sum_{k=1}^{N_D}|B_k|\f{x-z_k}{|x-z_k|^2}\cdot
a \q\q\q\q(x\in\p\Om).
 \label{Phi2}
\end{eqnarray}

 From now on, we shall identify $\mathbb{R}^2$ with the complex plane $\mathbb{C}$. In order to avoid confusion, we will adopt the following notations: $x=(x_1,x_2)$  denotes a point in $\mathbb{R}^2$ and $\mathbf{x}=x_1+ix_2$ will be the corresponding point in ${\mathbb C}$. Similarly, $x'=(x_1',x_2')$, $z_k=(z_{k_1},z_{k_2})$, $a = (a_1,a_2)$ in $\mathbb{R}^2$ can be changed to ${\bf x}' = x_1'+i x_2'$, ${\z_k}=z_{k_1}+iz_{k_2}$ and $ {\bf a}= a_1+ia_2$ in $\mathbb{C}$. Since $\lambda_c(\omega)$, $\lambda_d(\om)$ as well as $u^\omega$ are complex, we will consider real and imaginary part of $\Phi(x)$ separately.

The real part of $\Phi(x)$ for $x\in\p\Om$  can be expressed as
\begin{eqnarray}\label{Eq:maincom}
\q\q \Re\{\Phi(\x)\}=\Re\left\{
-\f{1}{2\pi}\sum_{k=1}^{N_C}\delta_k\int_{\mathcal{L}_k}{
\f{\mbox{\boldmath $\xi$}}{\bf x-x'}}d s_{\bf
x'}-\Re\left\{\f{\delta_D^2}{2\pi\lambda_d(\omega)}
\right\}\sum_{k=1}^{N_D}|B_k| \f{\bf a}{\x-\z_k}\right\},
\end{eqnarray}
where \mbox{\boldmath $\xi$}$=\xi\cdot (1,i)$ and $\xi$ is
\begin{eqnarray}\label{xi}
\xi=\Re\{2(1-\f{1}{\lambda_c(\omega)})a_{\nu_k}\nu_k+ 2(\lambda_c(\omega)-1)a_{\tau_k}\tau_k\}.
\end{eqnarray}
Since  $\mathcal L_k$ is the segment with endpoints $P_k, Q_k$,  it can be written as $P_k+t(Q_k-P_k), 0\leq t\leq 1$. Therefore, $\mathcal L_k$ has its unit tangent vector ${\mbox{\boldmath $\tau$}_k} ={\f{\Q_k-\P_k}{|\P_k-\Q_k|}}$ and its unit normal vector ${\mbox{\boldmath $\nu$}_k} = i{ \f{\Q_k-\P_k}{|\P_k-\Q_k|}}$ in $\mathbb{C}$. Hence, the integral term in (\ref{Eq:maincom}) can be written as
\begin{eqnarray*}
% \nonumber to remove numbering (before each equation)
  \int_{\mathcal{L}_k}{\bf \f{\mbox{\boldmath $\xi$}}{x-x'}}d s_{\bf x'} &=& {|\Q_k-\P_k|} \int_0^1 \f{\mbox{\boldmath $\xi$}}{{ (\x-\P_k)}-t{ (\Q_k-\P_k)}} dt\\
  &=& \f{ \mbox{\boldmath $\xi$}|\Q_k-\P_k|}{\Q_k-\P_k}\ln{\f{\x-\P_k}{\x-\Q_k}}.
\end{eqnarray*}
From (\ref{xi}), we have
\begin{eqnarray*}
 && \q\f{\mbox{\boldmath $\xi$}|\Q_k-\P_k|}{\Q_k-\P_k}\nonumber\\
 &&\q=\f{ |\Q_k-\P_k|}{\Q_k-\P_k}\left(2\Re\{1-\f{1}{\lambda_c(\omega)}\} a_{\nu_k}{\f{i(\Q_k-\P_k)}{|\P_k-\Q_k|}}+2\Re\{\lambda_c(\omega)-1\}a_{\tau_k}{\bf \f{Q_k-P_k}{|P_k-Q_k|}}\right)\nonumber\\
&&\q= 2\Re\{\lambda_c(\omega)-1\}a_{\tau_k}+i2\Re\{1-\f{1}{\lambda_c(\omega)}\} a_{\nu_k}.
\end{eqnarray*}
Therefore, (\ref{Eq:maincom}) can be simplified as
\begin{eqnarray}\label{Phi3}
\Re\{\Phi({\bf
x})\}=\Re\left\{\sum_{k=1}^{N_C}{\mathfrak{C}_k^{\Re}(\omega,\delta_k)}\ln{\f{\x-\Q_k}{\x-\P_k}}+
\sum_{k=1}^{N_D}{\mathfrak{D}_k^{\Re}(\omega,\delta_D)}\f{1}{\x-\z_k}\right\},
\end{eqnarray}
where $\mathfrak{C}_k^{\Re}(\omega,\delta_k)$ and $\mathfrak{D}_k^{\Re}(\omega,\delta_D)$ are the quantities defined in (\ref{Def:holo3-1}) and (\ref{Def:holo3-3}).
From (\ref{Phi3}), the real part of $\Phi$ can be viewed as the real part of the meromorphic function $G^{\Re}(\x)$ given by
\begin{eqnarray}\label{Gre}
G^{\Re}(\x):=\sum_{k=1}^{N_C}{\mathfrak{C}_k^{\Re}(\omega,\delta_k)}\ln{\f{\x-\Q_k}{\x-\P_k}}
+\sum_{k=1}^{N_D}{\mathfrak{D}_k^{\Re}(\omega,\delta_D)}\f{1}{\x-\z_k}.
\end{eqnarray}
Since $G^{\Re}(\x)$ is homomorphic except points $\P_k, \Q_k, \z_k$, it has complex derivative near $\p\Om$ in the complex plane:
\begin{eqnarray}\label{Gre2}
\f{d G^{\Re}(\x)}{d\x} =
\sum_{k=1}^{N_C}{\mathfrak{C}_k^{\Re}(\omega,
\delta_k)}\left(\f{1}{\x-\Q_k}-\f{1}{\x-\P_k}\right)-\sum_{k=1}^{N_D}{\mathfrak{D}_k^{\Re}
(\omega,\delta_D)}\f{1}{(\x-\z_k)^2}.
\end{eqnarray}
Similarly, we can give proof for the imaginary part of $\Phi(\x)$.
\end{proof}

The followings are remarks on Theorem \ref{Th:rebarC}:
\begin{remark}
According to Theorem \ref{Th:rebarC}, both $G^{\Re}(\x)$ and
$G^{\Im}(\x)$ can be viewed as known quantities  from the
knowledge of $(-\f{1}{2}I+\mathcal{K}_\Omega)[u^\omega-u_0]$ on
$\p\Om$. This theorem states that   $\f{d G^{\Re}({\bf x})}{d {\bf
x}}$ is a meromorphic function in $\mathbb{C}$ with simple poles
at the endpoints $\P_k,~\Q_k$ of the segments $\mathcal{L}_k$ and
poles of order $2$ at the center $\z_k$ of $D_k$.  Hence, the
residues of $\f{d G^{\Re}({\bf x})}{d {\bf x}}$ at the endpoints
are given by
\begin{eqnarray}\label{residue}
\mbox{Res} \left(\f{d G^{\Re}({\bf x})}{d {\bf x}},{\bf Q_k}\right)= \mathfrak{C}_k^{\Re}(\omega,\delta_k) = -
 \mbox{Res} \left(\f{d G^{\Re}({\bf x})}{d {\bf x}},{\bf P_k}\right).
\end{eqnarray}
The information of the center of $D_k$ is contained in the following function
\begin{eqnarray}\label{Eq:w}
w({\bf x})&:=&\sum_{k=1}^{N_D}{\mathfrak{D}_k^{\Re}}(\om,\delta_D)\f{1}{(\x-\z_k)^2}.
\end{eqnarray}
Then the function $\f{w'({\bf x})}{w({\bf x})}$ will have simple poles at poles of $w(\x)$. Hence, these center points can be identifies from boundary measurements \cite{Kang2004}.
\end{remark}
\begin{remark}
 To get some intuition of the frequency dependence of boundary data, let us look over the coefficients  $\mathfrak{C}_k^{\Re}(\omega,\delta_k)$ and $\mathfrak{D}_k^{\Re}(\omega,\delta_D)$ in the formula (\ref{Gre}). Recall that $\mathfrak{C}_k^{\Re}(\omega,\delta_k)$ is only related with concrete cracks $\mC_k$ while $\mathfrak{D}_k^{\Re}(\omega,\delta_D)$ is only related with reinforcing bars $D_k$. The $\lambda_c(\omega)$ in the quantity $\mathfrak{C}_k^{\Re}(\omega,\delta_k)$ satisfies
 $$C_1\f{\sigma_c+\om\epsilon_c}{\sigma_b+\om\epsilon_b}\leq|\lambda_c(\omega)|
 \leq C_2\f{\sigma_c+\om\epsilon_c}{\sigma_b+\om\epsilon_b},$$ where $C_1,C_2$ are positive constants independent of $\om$. Since $\lim_{\delta_k\rightarrow 0} |\mathfrak{C}_k^{\Re}(\omega,\delta_k)| =0$, the effect of the cracks at high frequencies is negligibly small. Therefore, the measured current-voltage data is mainly affected by reinforcing bars.

 As frequency decreases, $|\f{1}{\lambda_c(\omega)}|$ increases. Therefore, the quantity of term $\mathfrak{C}_k^{\Re}(\omega,\delta_k)$ becomes non-negligible. On the other hand, because $|\lambda_d(\omega)|$ does not change much with frequency, quantity of $\mathfrak{D}_k^{\Re}(\omega,\delta_D)$ varies little with respect to $\om$.
\end{remark}

The above analysis show that the effect of cracks on the boundary data highly depends on frequency, while the effect of reinforcing bars doesn't depend on frequency that much. This relation leads to the results that we can detect the reinforcing bars when frequency is  very high and  both cracks and reinforcing bars when frequency decreases.  Numerical simulations in the later part will show the verification of these analysis.
We can similarly analyze $\mathfrak{C}_k^{\Im}(\omega,\delta_k)$ and $\mathfrak{D}_k^{\Im}(\omega,\delta_D)$ as above remarks. For low frequency case, instead of applying the above theorem, we have the following results.
\subsection{Low-frequency case: {\boldmath $|\lambda_c(\omega)|\approx 0$ and $\delta_k\approx 0$ with $|\lambda_c(\omega)|^{-1}\delta_k \approx \beta$ and $0<\beta<\infty$}}

In low-frequency case,  the admittivity contrast $\lambda_c(\omega)$ is getting close to zero. As crack thickness $\delta_k$ goes to zero, we suppose that $\f{1}{\lambda_c(\omega)}\delta_k\approx\beta$. Then according to lemma \ref{Lemma:jump}, potential jump along each crack could not be ignored(see Figure \ref{Fig:flux}(a)). According to \cite{Ammari2006,Frieman1989,Zribi2011}, we have the following asymptotic expansion formula of the potential $u^{\om}$ for low frequency current.
\begin{theorem}[Asymptotic expansion at low frequencies]
In low-frequency case, we have the following asymptotic formula for the boundary perturbations of the potential $u^\omega$:
\begin{eqnarray}\label{Eq:lowAsympt}
 &\indent \displaystyle \left(-\f{1}{2}I+K_\Omega\right)[u^\omega-u_0](x) =-\delta_D^2 \sum_{k=1}^{N_D}\nabla \Gamma(x,z_k)\cdot M(\lambda_d,B_k)\nabla u_0(z_k)\nonumber\\
&  \indent \displaystyle +\sum_{k=1}^{N_C} \int_{\mathcal{L}_k}\f{\p \Gamma(x,x')}{\p \nu(x')} [u]_k(x')dx' +
O (\delta^2_k) +  O ((\delta_D)^3).
\end{eqnarray}
\end{theorem}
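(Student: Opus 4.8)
The plan is to obtain the identity from a Green's representation of $w:=u^\om-u_0$ against the fundamental solution $\Gamma$, treating the reinforcing bars and the cracks as two essentially decoupled families of interior sources, and then taking the trace on $\p\Om$. Throughout I would work with the effective zero-thickness model (\ref{modelcell}), in which $u^\om$ is harmonic in $\Om\setminus(\overline D\cup\cup_k\mathcal{L}_k)$, the normal-derivative jump $[\p_\nu u^\om]_{\mathcal{L}_k}$ vanishes, and the potential jump $[u^\om]_k$ is prescribed. The decisive structural difference from the high-frequency regime of Theorem \ref{Th:highasy} is that here $|\lambda_c(\om)|\approx0$ with $\delta_k/\lambda_c(\om)\approx\beta$, so by Lemma \ref{Lemma:jump} the density $[u^\om]_k\approx 2\beta\,\p_\nu u^\om|_+$ is of order one rather than $O(\delta_k)$; it must therefore be carried as a leading-order double-layer density and cannot be expanded further in $\delta_k$.

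First I would apply Green's second identity to $w$ and $\Gamma(x,\cdot)$ on $\Om\setminus(\overline D\cup\cup_k\mathcal{L}_k)$ for a fixed $x$ in the background region. Collapsing the two sides of each crack neighborhood onto $\mathcal{L}_k$ and using $[\p_\nu u^\om]_{\mathcal{L}_k}=0$ together with the smoothness of $u_0$ across $\mathcal{L}_k$, the single-layer contribution over the cracks drops out and only the double-layer term $\int_{\mathcal{L}_k}\f{\p\Gamma(x,x')}{\p\nu(x')}[u^\om]_k(x')\,ds_{x'}$ survives (its sign fixed by the orientation of $\nu$), with the $O(\delta_k^2)$ remainder inherited from the expansion in Lemma \ref{Lemma:jump}. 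Letting $x\to\p\Om$ and invoking the standard jump relation of the double-layer potential produces the operator $-\f12 I+\mathcal{K}_\Omega$ acting on $[u^\om-u_0]$ on the left, exactly as in the derivation of Theorem \ref{Th:highasy}; the single-layer piece on $\p\Om$ is determined by the common Neumann data and is handled precisely as in the high-frequency case.

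Next I would treat the reinforcing bars by the small-volume (polarization-tensor) expansion. Writing $D_k=z_k+\delta_D B_k$, rescaling the local transmission problem, and expanding $u_0$ about $z_k$, the leading interior source is the dipole $-\delta_D^2\,\na\Gamma(x,z_k)\cdot M(\lambda_d,B_k)\na u_0(z_k)$ with the polarization tensor (\ref{Eq:GPT}) and remainder $O(\delta_D^3)$. This step is identical in form to the inclusion term of Theorem \ref{Th:highasy} and may be quoted from the references cited there; because $|\lambda_d(\om)|$ stays away from degeneracy, the bar contribution is genuinely of order $\delta_D^2$ and insensitive to the low-frequency scaling of the cracks.

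I expect the main obstacle to be the bookkeeping of the coupling between the two families of sources and the uniform control of the remainders. Since the crack density $[u^\om]_k$ is $O(1)$ here, the cracks are no longer a small perturbation, so one must verify that the cross interactions---between distinct cracks, and between cracks and bars---enter only at the stated orders $O(\delta_k^2)$ and $O(\delta_D^3)$. This is where the separation hypotheses (\ref{separation}) are essential: the bound $\mbox{dist}_{k,j}(\mathcal{C}_k,D_j)\ge 2d_0$ keeps $\Gamma$ and its derivatives smooth across the supports of the different sources, so the interaction kernels are uniformly bounded and the superposition of the single-crack and single-bar expansions is justified up to the claimed error. A secondary technical point is to confirm that replacing the true thin-crack problem by the effective model (\ref{modelcell}) costs only $O(\delta_k^2)$ in the boundary data, which follows from Lemma \ref{Lemma:jump} and the near-boundary closeness $u^\om\approx\widetilde u^\om$ already noted.
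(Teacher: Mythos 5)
Your sketch reconstructs, in more detail than the paper itself supplies, the route the paper implicitly relies on: the theorem is stated there \emph{without proof}, justified only by the citation of \cite{Ammari2006,Frieman1989,Zribi2011}, and the standard derivation in those references is exactly your combination of a Green/layer-potential representation (whose trace on $\p\Om$ produces $-\f{1}{2}I+\mathcal{K}_\Omega$), the cancellation of the single-layer crack term via $\left[\f{\p u^\om}{\p\nu}\right]_{\mathcal L_k}=0$, the double-layer density $[u^\om]_k$ kept implicit because $\delta_k/\lambda_c(\om)\approx\beta$ makes it $O(1)$, and the polarization-tensor dipole expansion for each bar. On that structural level the proposal is sound and faithful to the intended argument.

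There is, however, one step you assert but do not justify, and it is precisely the delicate one: the claim that the crack--bar coupling enters only at the orders $O(\delta_k^2)+O(\delta_D^3)$. In the low-frequency regime the density $[u^\om]_k\approx 2\beta\,\f{\p u^\om}{\p\nu}\big|_+$ is of order one, so the double-layer potentials supported on the $\mathcal L_j$ perturb the field at the bar centers at order one; the separation condition (\ref{separation}) makes the interaction kernels \emph{bounded and smooth}, not \emph{small}. Consequently the dipole of $D_k$ is driven by the gradient of the crack-perturbed background field, and replacing that by $\na u_0(z_k)$, as in (\ref{Eq:lowAsympt}), commits an error of size $O(\delta_D^2)$ --- the same order as the bar term itself --- which is not absorbed by the stated $O(\delta_D^3)$ remainder. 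This is not a defect peculiar to your argument (the superposition form quoted in the paper has the same feature, and it is harmless for the paper's qualitative conclusion, since at low frequencies the bar term is in any case dominated by the $O(1)$ crack term), but a complete proof along your lines must either replace $u_0$ in the bar term by the solution of the crack-only problem, or enlarge the remainder to $O(\delta_D^2)$; the appeal to (\ref{separation}) alone cannot deliver the error bound you claim.
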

In this case, since the potential jump $[u^\omega]_k=2\delta_k\f{1}{\lambda_c(\omega)} \f{\p u^\omega}{\p\nu}(x-\delta_k\nu_x )|_+$ along $\mathcal L_k$ is very large and could not be ignored, the effect of reinforcing bars on the perturbations of the boundary voltage is hidden by cracks. Although we cannot write (\ref{Eq:lowAsympt}) in an explicit way, we know that it is related with the endpoints as well as the potential jump along the concrete cracks. When multiple concrete cracks are well separated from each other, we can always image them from boundary measurements. However, reinforcing bars at low frequencies are invisible since the concrete cracks will dominate the boundary measurements.
\subsection{Spectroscopic analysis}
Based on the above analysis in low-frequency case and high-frequency case, we mathematically derived the frequency dependency of the current-voltage data in a rigorously way. The current-voltage data is mainly affected by the outermost cracks when frequency is low, whereas the data mainly depends on the reinforcing bars when frequency is high. With this reason, we can detect the outermost cracks at low frequency. As frequency increases, the reinforcing bars become gradually visible whereas cracks fade out (thicker crack fades out at higher frequency than thinner crack). Hence, multi-frequency EIT system allows to probe these frequency dependent behavior.
\section{Numerical simulations}
We make use of three different numerical simulation models on a disk $\Omega=\{(x,y):x^2+y^2\leq (0.1)^2\}$ with radius unit $m$ as shown in figure \ref{Tb:Models}. We generate a finite element mesh of the disk using triangular elements.  Inside the disk, we place cracks and bars. Complex admittivity distribution for each model is chosen as shown in table \ref{Tb:admt}.
\begin{figure}[ht!]
\centering
\begin{tabular}{c|c|c}
\hline
Model 1& Model 2& Model 3\\
\hline\\
\includegraphics[scale=0.3]{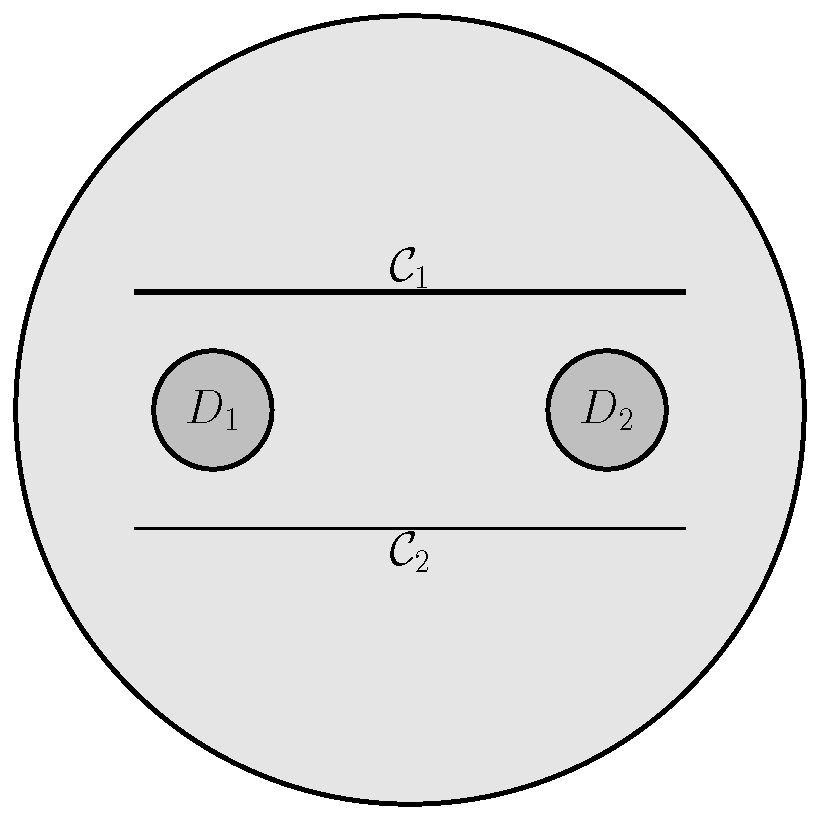}&
\includegraphics[scale=0.3]{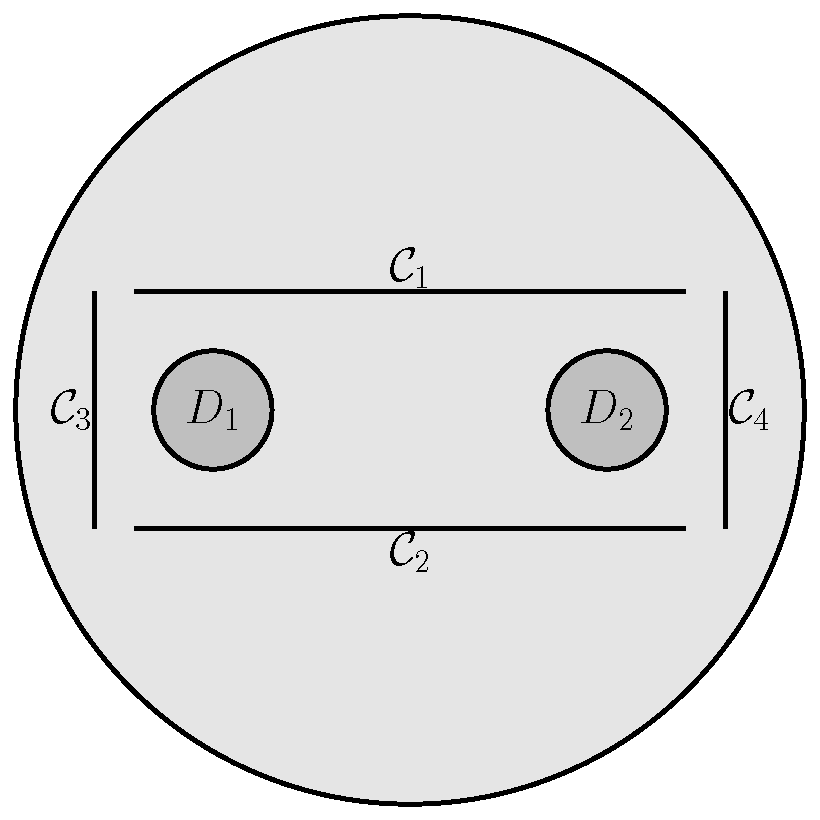}&
\includegraphics[scale=0.3]{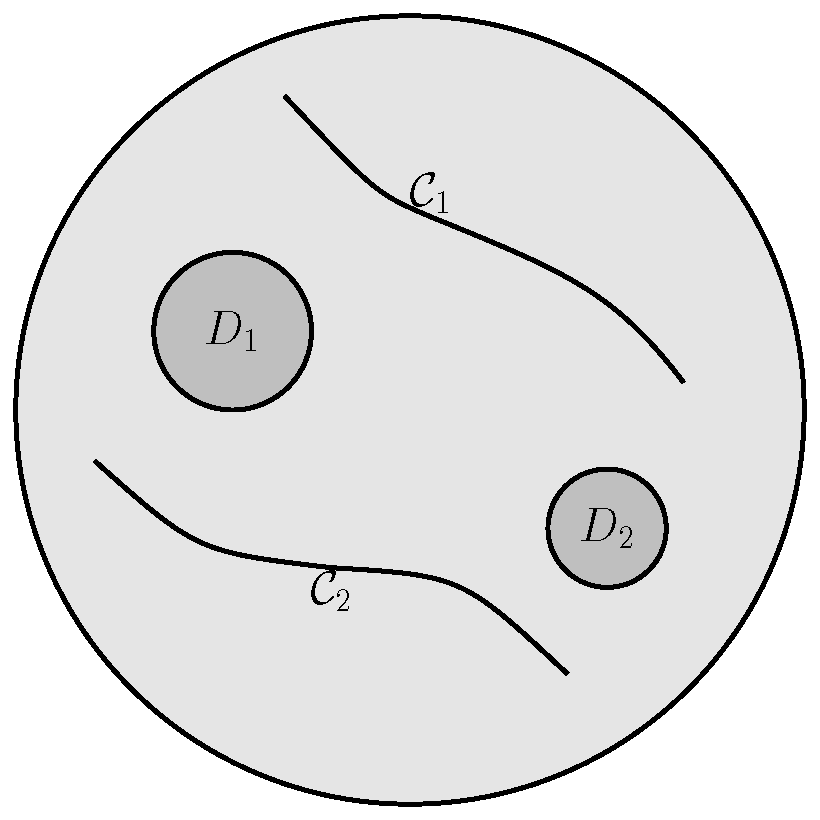}\\
\hline
\end{tabular}\caption{Models for numerical simulation.}
\label{Tb:Models}
\end{figure}
\begin{table}[h!]\centering
  \begin{tabular}{c|c}
    \hline
   Subdomain & Admittivity distribution\\ \hline
    $D_1,~D_2$ & $\q\q\gamma_d^\omega =10^5+i\omega*10^6*\epsilon_0\q\q$ \\
$\q\q\mC_1,~\mC_2,~\mC_3,~\mC_4\q\q$ & $\q\q\gamma_c^\omega =10^{-6}+i\omega*10^2*\epsilon_0\q\q$\\
Otherwise & $\gamma_b^\omega =1+i\omega*10^4*\epsilon_0$ \\ \hline
  \end{tabular}
  \caption{Admittivity distribution in each subdomain (\scriptsize $\epsilon_0 = 8.85*10^{-12}F/m$)}
  \label{Tb:admt}
\end{table}
In the numerical simulations, we use the standard 16-channel multi-frequency EIT system\cite{Kim2008, Oh2007,Seo2012} where 16 electrodes $\mE_1,\cdots,\mE_{16}$ are attached to $\p\Omega$ with uniform distance between two adjacent electrodes.
We inject 16  number of currents  using adjacent pairs of electrodes to generate simulated current-voltage data set
\begin{equation*}
\mathbb{F}_{\om}=\left[V_{\om}^{1,1},\cdots,V_{\om}^{1,16},
V_{\om}^{2,1},\cdots,V_{\om}^{2,16},\cdots\cdots,V_{\om}^{16,1},\cdots,V_{\om}^{16,16}\right]^T.
\end{equation*}
where $V_{\om}^{j,k}$ denotes the potential difference between electrodes $\mE_{k}$ and $\mE_{k+1}$ when the $j$-th current is injected using the adjacent  pair $\mE_{j}$ and $\mE_{j+1}$. To be precise, $V_{\om}^{j,k}=u^\om_j|_{\mE_k}-u^\om_j|_{\mE_{k+1}}$ is computed by solving the  following mixed boundary value problem
\begin{equation}\label{fw7}
 \left\{\begin{array}{l}
    \na \cdot\left( \gamma^\om \na u^\om_j\right)=0\quad \hbox{ in }\Omega\\
    1=-\int_{\mE_j} \gamma^\om\f{\p u^\om_j}{\p \nu} \;ds
    =\int_{\mE_{j+1}}\gamma^\om
    \f{\p u^\om_j}{\p \nu} ds\\
    \na u^\om_j\times \nu|_{\mE_k}=0,\q \int_{\mE_k} \gamma^\om\f{\p u^\om_j}{\p \nu} \;ds=0 ~~(k=1,\cdots, 16)
       \\
   \gamma_\om \f{\p u^\om_j}{\p \nu}=0 \q\mbox{ on } \p\Om\setminus
   \cup_{k=1}^{16}\mE_k,\q\q \int_{\p\Om} u^\om_j=0
    \end{array}
    \right.
\end{equation}
where the contact impedance is ignored for simplicity.
For  $\om=\om_1,\om_2,\cdots,\om_{N_\om}$ ranging from 10Hz to 1MHz, we get $N_\om$ data vectors  $\mathbb{F}_{\om_1},\mathbb{F}_{\om_2},\cdots, \mathbb{F}_{\om_{N_\omega}}$.
 The EIT reconstruction method makes use of the sensitivity matrix $\mathbb{S}$:
\begin{small}
\begin{equation*}
\mathbb{S}=
\left[ \begin{matrix}
S_1^{1,1}&S_2^{1,1}&\cdots&S_p^{1,1} &\cdots& S_{N_T-1}^{1,1}& S_{N_T}^{1,1}\\
 &&\cdots& \\
S_1^{1,16}&S_2^{1,16}&\cdots&S_p^{1,16} &\cdots&S_{N_T-1}^{1,16} & S_{N_T}^{1,16}\\
&&\vdots&\\
S_1^{16,1}&S_2^{16,1}&\cdots&S_p^{16,1} &\cdots& S_{N_T-1}^{16,1}&S_{N_T}^{16,1}\\
 &&\cdots& \\
S_1^{16,16}&S_2^{16,16}&\cdots&S_p^{16,16} &\cdots&S_{N_T-1}^{16,16}& S_{N_T}^{16,16}\\
\end{matrix}\right]_{(16)^2\times N_T},
\end{equation*}
\end{small}
where $N_T$ is the number of triangular elements and
$$
S_p^{j,k} = \int_{T_p}\na U_j(x)\cdot\na U_k(x)dx
$$ with $U_j$ being the solution of forward problem (\ref{fw7}) with $\gamma^\omega=1$ subject to $j-$th current injection between $\mE_j$ and $\mE_{j+1}$.

We reconstruct the spectroscopic conductivity and permittivity images by solving the following linear system:
$$\mathbb{S}\delta\gamma_{\om}=\mathbb{F}_{\om}-\mathbb{F}_{\om,0}$$
where $\mathbb{F}_{\om,0}$ is the collected current-voltage data in absence of anomaly. We will describe the numerical simulations case by case.

For  model 1 in figure \ref{Tb:Models}, there are two reinforcing bars $D_1=\{(x,y): (x+0.05)^2+y^2\leq (0.015)^2\}$, $D_2=\{(x,y): (x-0.05)^2+y^2\leq (0.015)^2\}$ and two thin concrete cracks $\mC_1=\{(x,y): |x|<0.07, |y-0.03|<5\times10^{-5}\}$, $\mC_2=\{(x,y): |x|<0.07, |y+0.03|<2.5\times10^{-5}\}$. The numerical simulation in figure \ref{Fig:recon1} shows that at low frequency only concrete cracks are visible; as frequency goes higher, thinner insulating crack begin to fade out whereas thicker crack is still visible; at high frequency, only reinforcing bars are visible.
\begin{figure}[ht!]
  \centering
\begin{tikzpicture}[scale=0.8]
\node at (-8.5,0) {$\f{\omega}{2\pi}$};
\node at (-8.5,-1.5) {$\sigma$};
\node at (-7,0) {10Hz};
\node at (-4.6667,0) { 100Hz};
\node at (-2.3333,0) { 10kHz};
\node at (0,0) { 250kHz};
\node at (2.3333,0) { 500kHz};
\node at (4.6667,0) { 800kHz};
\node at (-7,-1.5) {\includegraphics[width=1.8cm]{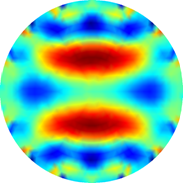}};
\node at (-4.6667,-1.5) {\includegraphics[width=1.8cm]{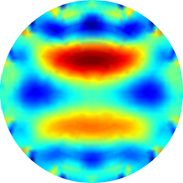}};
\node at (-2.3333,-1.5) {\includegraphics[width=1.8cm]{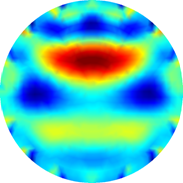}};
\node at (0,-1.5) {\includegraphics[width=1.8cm]{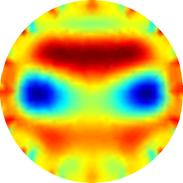}};
\node at (2.3333,-1.5) {\includegraphics[width=1.8cm]{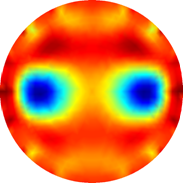}};
\node at (4.6667,-1.5) {\includegraphics[width=1.8cm]{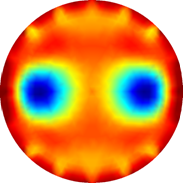}};
\end{tikzpicture}
\begin{tikzpicture}[scale=0.8]
\node at (-8.5,0) {$~~~~~~$};
\node at (-8.5,-1.5) {$\epsilon$};
\draw[->] (-8.5, -2.75) -- (6, -2.75);
\node at (-7,-1.5) {\includegraphics[width=1.8cm]{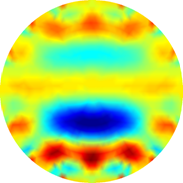}};
\node at (-4.6667,-1.5) {\includegraphics[width=1.8cm]{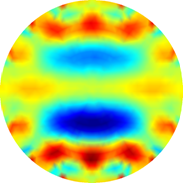}};
\node at (-2.3333,-1.5) {\includegraphics[width=1.8cm]{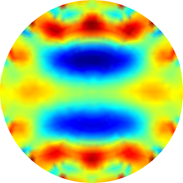}};
\node at (0,-1.5) {\includegraphics[width=1.8cm]{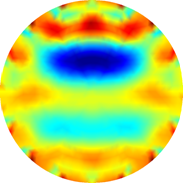}};
\node at (2.3333,-1.5) {\includegraphics[width=1.8cm]{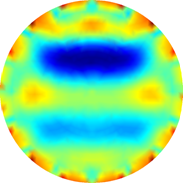}};
\node at (4.6667,-1.5) {\includegraphics[width=1.8cm]{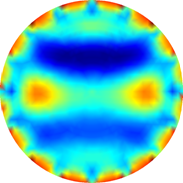}};
\node at (-7.5,-3) {Low}; \node at (5,-3) {High};
\end{tikzpicture}
\caption{Reconstructed admittivity image for model 1 using 16-channel multi-frequency EIT method: the first row is  $\sigma$ and the second row is  $\epsilon$.}
\label{Fig:recon1}
\end{figure}

For model 2 in figure \ref{Tb:Models}, two reinforcing bars $D_1=\{(x,y): (x+0.05)^2+y^2\leq (0.015)^2\}$, $D_2=\{(x,y): (x-0.05)^2+y^2\leq (0.015)^2\}$ are encircled by four concrete cracks; $\mC_1=\{(x,y): |x|<0.07, |y-0.03|<2.5\times10^{-5}\}$, $\mC_2=\{(x,y): |x|<0.07, |y+0.03|<2.5\times10^{-5}\}$,$\mC_3=\{(x,y): |x+0.08|<2.5\times10^{-5}, |y|<0.03\}$,$\mC_4=\{(x,y): |x-0.08|<2.5\times10^{-5}, |y|<0.03\}$. The simulation in figure \ref{Fig:recon2} shows that at low frequency, four outermost encircled cracks appear to be one object whereas reinforcing bars are hidden by cracks; as frequency increases, cracks gradually disappear whereas reinforcing bars begin to fade in.
\begin{figure}[ht!]
  \centering
\begin{tikzpicture}[scale=0.8]
\node at (-8.5,0) {$\f{\omega}{2\pi}$};
\node at (-8.5,-1.5) {$\sigma$};
\node at (-7,0) {10Hz};
\node at (-4.6667,0) { 100Hz};
\node at (-2.3333,0) { 10kHz};
\node at (0,0) { 250kHz};
\node at (2.3333,0) { 500kHz};
\node at (4.6667,0) { 800kHz};
\node at (-7,-1.5) {\includegraphics[width=1.8cm]{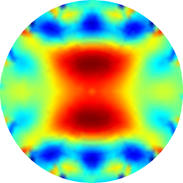}};
\node at (-4.6667,-1.5) {\includegraphics[width=1.8cm]{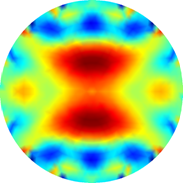}};
\node at (-2.3333,-1.5) {\includegraphics[width=1.8cm]{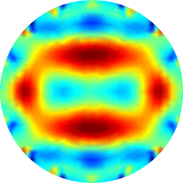}};
\node at (0,-1.5) {\includegraphics[width=1.8cm]{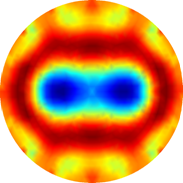}};
\node at (2.3333,-1.5) {\includegraphics[width=1.8cm]{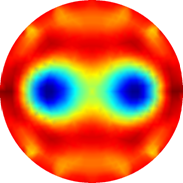}};
\node at (4.6667,-1.5) {\includegraphics[width=1.8cm]{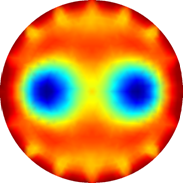}};
\end{tikzpicture}
\begin{tikzpicture}[scale=0.8]
\node at (-8.5,0) {$~~~~~~$ };
\node at (-8.5,-1.5) {$\epsilon$};
\draw[->] (-8.5, -2.75) -- (6, -2.75);
\node at (-7,-1.5) {\includegraphics[width=1.8cm]{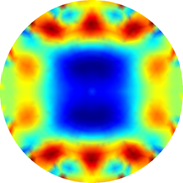}};
\node at (-4.6667,-1.5) {\includegraphics[width=1.8cm]{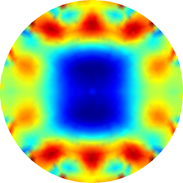}};
\node at (-2.3333,-1.5) {\includegraphics[width=1.8cm]{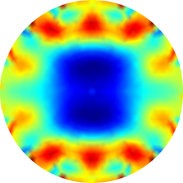}};
\node at (0,-1.5) {\includegraphics[width=1.8cm]{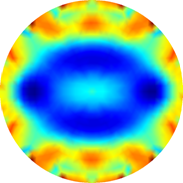}};
\node at (2.3333,-1.5) {\includegraphics[width=1.8cm]{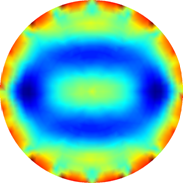}};
\node at (4.6667,-1.5) {\includegraphics[width=1.8cm]{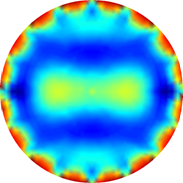}};
\node at (-7.5,-3) {Low}; \node at (5,-3) {High};
\end{tikzpicture}
\caption{Reconstructed admittivity image for model 2 using 16-channel multi-frequency EIT method: the first row is  $\sigma$ and the second row is  $\epsilon$.}
\label{Fig:recon2}
\end{figure}

For model 3 in figure \ref{Tb:Models}, there are two curved cracks with uniform thickness $5\times 10^{-5}m$ and two reinforcing bars $D_1=\{(x,y): (x+0.045)^2+(y-0.02)^2\leq (0.02)^2\}$, $D_2=\{(x,y): (x-0.05)^2+(y-0.03)^2\leq (0.015)^2\}$. The numerical simulations in Figure \ref{Fig:recon3} shows a similar behavior as in the previous simulations.
\begin{figure}[ht!]
  \centering
\begin{tikzpicture}[scale=0.8]
\node at (-8.5,0) {$\f{\omega}{2\pi}$};
\node at (-8.5,-1.5) {$\sigma$};
\node at (-7,0) {10Hz};
\node at (-4.6667,0) { 100Hz};
\node at (-2.3333,0) { 10kHz};
\node at (0,0) { 250kHz};
\node at (2.3333,0) { 500kHz};
\node at (4.6667,0) { 800kHz};
\node at (-7,-1.5) {\includegraphics[width=1.8cm]{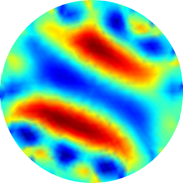}};
\node at (-4.6667,-1.5) {\includegraphics[width=1.8cm]{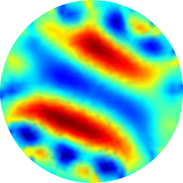}};
\node at (-2.3333,-1.5) {\includegraphics[width=1.8cm]{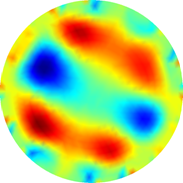}};
\node at (0,-1.5) {\includegraphics[width=1.8cm]{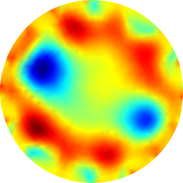}};
\node at (2.3333,-1.5) {\includegraphics[width=1.8cm]{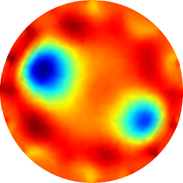}};
\node at (4.6667,-1.5) {\includegraphics[width=1.8cm]{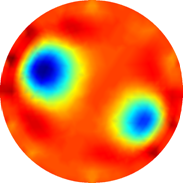}};
\end{tikzpicture}
\begin{tikzpicture}[scale=0.8]
\node at (-8.5,0) {$~~~~~~$ };
\node at (-8.5,-1.5) {$\epsilon$};
\draw[->] (-8.5, -2.75) -- (6, -2.75);
\node at (-7,-1.5) {\includegraphics[width=1.8cm]{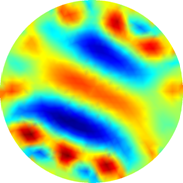}};
\node at (-4.6667,-1.5) {\includegraphics[width=1.8cm]{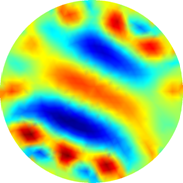}};
\node at (-2.3333,-1.5) {\includegraphics[width=1.8cm]{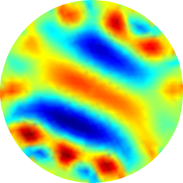}};
\node at (0,-1.5) {\includegraphics[width=1.8cm]{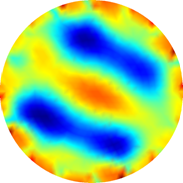}};
\node at (2.3333,-1.5) {\includegraphics[width=1.8cm]{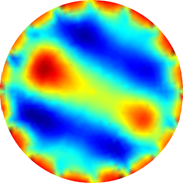}};
\node at (4.6667,-1.5) {\includegraphics[width=1.8cm]{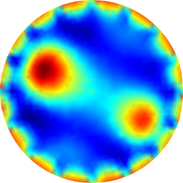}};
\node at (-7.5,-3) {Low}; \node at (5,-3) {High};
\end{tikzpicture}
\caption{Reconstructed admittivity image for model 3 using 16-channel multi-frequency EIT method: the first row is  $\sigma$ and the second row is  $\epsilon$.}
\label{Fig:recon3}
\end{figure}

\section{Conclusion}
In this work, we have developed two asymptotic expansions for
current-voltage data perturbations due to cracks and reinforcing
bars at various frequencies. Using these two asymptotic
expansions, we have mathematically shown that at high frequencies
we can visualize the reinforcing bars, while at low frequencies we
can only get the information of concrete cracks. Based on these
mathematical analysis, we conclude that multiple frequencies help
us to handle the spectroscopy behavior of the current-voltage data
with respect to cracks and reinforcing bars. When the frequency
increase from very low to very high, we can continuously observe
the images of cracks (low frequency), both cracks and reinforcing
bars (not too low, not too high frequency), and only reinforcing
bars (high frequency). The mathematical results are supported by
numerical illustrations.
\section*{Acknowledgements}

Ammari was supported  by the ERC Advanced Grant Project MULTIMOD--267184. Seo, Zhang and Zhou were supported by the National Research Foundation of Korea (NRF) grant funded by the Korean government (MEST) (No. 2011-0028868, 2012R1A2A1A03670512)

\bibliographystyle{siam}

\end{document}